\documentclass[pdflatex,sn-mathphys-num]{sn-jnl}

\usepackage{graphicx}%
\usepackage{wrapfig}
\usepackage{subfigure}
\usepackage{multirow}%
\usepackage{amsmath,amssymb,amsfonts}%
\usepackage{mathtools}
\usepackage{nicefrac}
\usepackage{amsthm}%
\usepackage{mathrsfs}%
\usepackage[title]{appendix}%
\usepackage{xcolor}%
\usepackage{textcomp}%
\usepackage{manyfoot}%
\usepackage{booktabs}%
\usepackage{algorithm}%
\usepackage{algorithmicx}%
\usepackage{algpseudocode}%
\usepackage{listings}%

\newcommand{\R}{\mathbb{R}}
\newcommand{\sY}{{\R^{d_y}}}
\newcommand{\sX}{{\R^{d_x}}}

\newcommand{\und}[2]{\underset{#2}{#1}}

\theoremstyle{thmstyleone}%
\newtheorem{theorem}{Theorem}
\theoremstyle{thmstyletwo}%

\theoremstyle{thmstylethree}%
\newtheorem{definition}{Definition}%

\theoremstyle{thmstylethree}
\newtheorem{lemma}{Lemma}

\begin{document}

\title[Strengthening the finite characterizations of smooth min-max games]{Strengthening the finite characterizations of smooth min-max games}


\author*[1]{\fnm{Valery} \sur{Krivchenko}}\email{krivchenko.vo@phystech.edu}

\author[2]{\fnm{Alexander} \sur{Gasnikov}}\email{gasnikov@yandex.ru}

\author[3]{\fnm{Dmitry} \sur{Kovalev}}\email{dakovalev1@gmail.com}

\affil*[1]{\orgdiv{Research Center of the Artificial Intelligence Institute}, \orgname{Innopolis University}, \orgaddress{\street{Universitetskaya St.}, \city{Innopolis}, \postcode{420500}, \state{Republic of Tatarstan}, \country{Russia}}}

\affil[2]{\orgname{Innopolis University}, \orgaddress{\street{Universitetskaya St.}, \city{Innopolis}, \postcode{420500}, \state{Republic of Tatarstan}, \country{Russia}}}

\affil[3]{\orgdiv{Yandex Research}, \orgname{Yandex}, \orgaddress{\street{Ulitsa L'va Tolstogo}, \city{Moscow}, \postcode{119021}, \state{Moscow}, \country{Russia}}}


\abstract{Saddle point problems with smooth convex–concave objective functions are often used to model  min-max problems arising in machine learning. First-order methods are the standard paradigm for solving such problems. Therefore, it is important to know how those methods behave in the worst-case scenarios. In order to derive the guarantees, one would require the inequalities that appropriately constrain the iterates, gradient's and function's values. In this paper, we present stronger constraints for smooth convex–concave functions and show that they could allow tighter upper bounds for first-order methods.}

\keywords{saddle point, smooth games, convex--concave, interpolation conditions, constraints, performance estimation problem}



\maketitle

\section{Introduction}\label{sec:Introduction} 

In this paper we consider the following saddle point problem (SPP):
\begin{align} \label{problem:minmax}
	\und{\min}{x \in \sX} \und{\max}{y \in \sY} f(x,y),
\end{align}
where $f$ is a smooth convex--concave function. In literature \cite{zhang2021unified}, this setting is known as smooth min–max games and has been a subject of active research mostly due to applications in machine learning. Notable examples include generative adversarial networks \cite{goodfellow2020generative} and reinforcement learning \cite{li2019robust}. 

First-order gradient-based methods are undoubtedly the standard paradigm for solving large-scale optimization problems. The main reason for this is the low cost per iteration and satisfactory effectiveness. Therefore, there is a need to understand how such methods perform in the worst possible scenarios within the defined setting and to compare their worst-case performance. When it comes to the exact worst-case behavior of first-order methods in min-max setting, there remains a lot of unknown \cite{zhang2021unified,zhang2022near,lee2024fundamental}. 

Almost all proofs of worst-case bounds and all proofs obtained via PEP \cite{drori2014performance, goujaud2023fundamental} involve constraints that describe algebraic relations between quantities appearing in the algorithm, most notably the iterates  $x_i$, the gradients $\nabla f(x_i)$ and the functional values $f(x_i)$. These constraints implicitly impose restrictions on the admissible gradients and functional values, ensuring that they are consistent with some function from function class. Ideally, the set of admissible iterates, gradients, and functional values should be restricted to those that can be \textit{interpolated} by a real function belonging to the considered class. Otherwise, the feasible set may contain artificial configurations that do not correspond to any actual function, leading to overly pessimistic bounds \cite{rubbens2023interpolation}.

Interpolation of function classes has been a subject of intensive research and has been particularly fruitful in the convex setting \cite{taylor2017smooth, rubbens2024constraint}. Those results enabled the exact solution of worst-case performance problems for a number of key first-order algorithms \cite{De_Klerk2017-fm, doi:10.1137/16M108104X, Taylor2018-gk}. In contrast, in the convex–concave setting, interpolation conditions have only been derived for the general non-smooth case and for smooth convex–concave functions with bilinear coupled structure \cite{Krivchenko_Gasnikov_Kovalev_2024}. Interpolation conditions for smooth convex–concave functions with unspecified structure have not yet been derived. 

In this paper, we present stronger constraints for smooth convex--concave functions. Using the framework for automatic generation of Lyapunov functions \cite{taylor2018lyapunov}, we show that our constraints may allow noticeably tighter guarantees for first-order methods in the setting.

\section{Preliminaries}

Consider a general variational inequality (VI) problem~\cite{Harker1990}
\begin{align}
    \text{find} \hspace{4pt} z_* \in \Omega \hspace{8pt} \text{s.t.} \hspace{8pt} \langle g^z(z_*), z-z_*\rangle \geqslant 0 \hspace{8pt} \text{for all} \hspace{4pt} z \in \Omega,
    \label{eq:var_ineq}
\end{align}
where $\Omega$ is a non-empty convex subset of $\R^d$ and $g^z : \R^d \rightarrow \R^d$ is a continuous mapping. Convex--concave SPP \eqref{problem:minmax}, where $f$ is smooth, is a specific instance \cite{kinderlehrer2000introduction} of \eqref{eq:var_ineq} with
\begin{align*}
    g^z(z) = [\nabla_x^{\top} f(z), -\nabla_y^{\top} f(z)]^{\top},
\end{align*}
where $z = [x^{\top}, y^{\top}]^{\top}$. In addition, because $f$ is convex--concave, any solution $z_* = [x_*^{\top}, y_*^{\top}]^{\top}$ of \eqref{eq:var_ineq} is a global Nash Equilibrium~\cite{VonNeumann+Morgenstern:1944}:
\begin{align}
    f(x_*,y) \leqslant f(x_*,y_*) \leqslant f(x,y_*) \quad \forall x \in \sX, y \in \sY.
\end{align}

\begin{definition} Let $f \in \mathcal S$. We introduce a mapping $g^z(z) = [g^x(z)^{\top}, -g^y(z)^{\top}]^{\top}$ where $g^x(z) \in \partial_x f(z)$, $g^y(z) \in \partial_y f(z)$, and $z = [x^{\top}, y^{\top}]^{\top}$. We call $g^z(z)$ an operator of $f$.
\end{definition}

\begin{definition} Let $f \in \mathcal S$. We say that $f$ has $\mu$-strongly monotone operator or $f$ is $\mu$-strongly-convex-$\mu$-strongly-concave ($\mu$-SCSC) if
\begin{equation}
    \langle g^z(w) - g^z(z),w-z \rangle \geqslant 
        \mu \|w - z\|_2^2
\end{equation}
for all $z, w \in \sX \times \sY$.
\label{fed:monotone}
\end{definition}

\begin{definition} Let $f \in \mathcal S$. We say that $f$ has $L$-Lipschitz operator or $f$ is $L$-smooth if $L > 0$ and
    \begin{equation}
        \|g^z(w) - g^z(z)\|_2 \leqslant 
        L \|w - z\|_2
    \end{equation}
    for all $z, w \in \sX \times \sY$.
\label{fed:smooth}
\end{definition}

\begin{definition} Let $f \in \mathcal S$. We say that $f$ has $1/L$-cocoercive operator if $L > 0$ and
\begin{equation}
    \langle g^z(w) - g^z(z), w-z \rangle \geqslant 
        \frac{1}{L} \|g^z(w) - g^z(z)\|_2^2
\end{equation}
for all $w, z \in \sX \times \sY$.
\label{fed:cocoercive}
\end{definition}

We shall use the following notation for function classes:
\begin{align*}
    & \mathcal F: \text{ convex, } \\
    & \mathcal F_{\mu, L}: \mu-\text{strongly convex, }\ L-\text{smooth,} \\
    & \mathcal S: \text{convex--concave,} \\
    & \mathcal S_{\mu}: \mu-\text{SCSC ,} \\
    & \mathcal S_{\mu, L}: \mu-\text{SCSC}, \ L-\text{smooth ,} \\
    & \mathcal S^{cc}_{\mu, L}: \mu-\text{SCSC}, \ 1/L-\text{cocoercive operator},
\end{align*}
where $0 < \mu < L$ in all definitions.

In the existing studies \cite{taylor2017smooth}, the interpolability is defined as follows:

\begin{definition}
\textit{Let I be an index set and consider a sequence $A = \left\{(x_i, g_i,f_i)\right\}_{i \in I}$ where $x_i, g_i \in \R^d$, $f_i \in \R$ for all $i \in I$. Consider a set of convex functions $\mathcal F$. Sequence $A$ is $\mathcal F$-interpolable if and only if there exists function $f \in \mathcal F$ such that $g_i \in \partial f(x_i)$, $f_i = f(x_i)$ for all $i \in I$.}
\label{fed:interpolation_convex}
\end{definition}

For convex--concave functions, we define interpolable sequences similarly. 

\begin{definition}
\textit{Let I be an index set and consider a sequence $\widetilde{A}_I = \left\{(x_i, y_i, g^x_i, g^y_i, f_i)\right\}_{i \in I}$ where $x_i, g^x_i \in \sX$, $y_i, g^y_i \in \sY$, $f_i \in \R$ for all $i \in I$. Consider a set of convex--concave functions $\mathcal S$. Sequence $\widetilde{A}_I$ is $\mathcal S$-interpolable if and only if there exists function $f \in \mathcal S$ such that $g^x_i \in \partial_x f(x_i, y_i)$, $g^y_i \in \partial_y f(x_i, y_i)$, $f_i = f(x_i, y_i)$ for all $i \in I$.}
\label{fed:interpolation}
\end{definition}

\section{Auxiliary results}

\subsection{Strongly convex strongly concave functions}

Interpolation conditions for non-smooth convex--concave functions.

\begin{theorem} [$\mathcal S$-interpolation \cite{Krivchenko_Gasnikov_Kovalev_2024}] 
Sequence $\widetilde{A}_I$ is $\mathcal S_\mu$-interpolable if and only if $\forall i,j \in I$:
\begin{equation}
    f_i \geqslant f_j + \langle g^x_j,x_i-x_j \rangle + \langle g^y_i, y_i-y_j\rangle.
\end{equation}
\label{teo:nonsmooth_interpolation}
\end{theorem}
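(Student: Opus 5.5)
The condition contains no $\mu$, so I read the statement as the characterization of $\mathcal S$-interpolability, for general convex--concave $f$. The plan is to prove necessity directly and to get sufficiency by reducing to the convex interpolation theorem through a partial conjugation in $y$, in the spirit of Rockafellar's correspondence between saddle functions and convex functions.

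\emph{Necessity.} Let $f\in\mathcal S$ interpolate $\widetilde A_I$, and fix $i,j\in I$. Convexity of $f(\cdot,y_j)$ with $g^x_j\in\partial_x f(x_j,y_j)$ gives $f(x_i,y_j)\geqslant f_j+\langle g^x_j,x_i-x_j\rangle$. Concavity of $f(x_i,\cdot)$ with $g^y_i\in\partial_y f(x_i,y_i)$ gives $f(x_i,y_j)\leqslant f_i+\langle g^y_i,y_j-y_i\rangle$. Chaining the two inequalities through the mixed value $f(x_i,y_j)$ yields exactly the stated condition.

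\emph{Sufficiency.} The idea is to work with the partial conjugate
\[
F(x,v)=\sup_{y}\{f(x,y)-\langle v,y\rangle\}.
\]
For $f\in\mathcal S$ this is jointly convex in $(x,v)$: for each fixed $y$ the bracket is convex in $x$ and affine in $v$. Moreover, at the point $(x_i,g^y_i)$ it should take the value $F_i:=f_i-\langle g^y_i,y_i\rangle$ with subgradient $(g^x_i,-y_i)$. So I form the convex data set
\[
\{((x_i,g^y_i),\,(g^x_i,-y_i),\,F_i)\}_{i\in I}.
\]
Its convex interpolation inequality is
\[
F_i\geqslant F_j+\langle g^x_j,x_i-x_j\rangle-\langle y_j,g^y_i-g^y_j\rangle .
\]
After substituting $F_i,F_j$ and cancelling $\langle g^y_j,y_j\rangle$, this becomes precisely
\[
f_i\geqslant f_j+\langle g^x_j,x_i-x_j\rangle+\langle g^y_i,y_i-y_j\rangle .
\]
Hence, under the hypothesis, the interpolation theorem for $\mathcal F$ provides a convex $F$ with these values and subgradients, for instance $F(x,v)=\max_j\{F_j+\langle g^x_j,x-x_j\rangle-\langle y_j,v-g^y_j\rangle\}$.

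I then define
\[
f(x,y)=\inf_v\{F(x,v)+\langle v,y\rangle\}.
\]
This is concave in $y$, as an infimum of affine functions, and convex in $x$, as a partial minimization of a jointly convex function. It remains to check three properties at each data point. First, $f(x_i,y_i)=f_i$: the choice $v=g^y_i$ gives the upper bound. For the lower bound, the subgradient inequality of $F$ at $(x_i,g^y_i)$ gives $F(x_i,v)+\langle v,y_i\rangle\geqslant F_i+\langle g^y_i,y_i\rangle=f_i$ for every $v$. Second, $g^x_i\in\partial_x f(x_i,y_i)$: for every $v$, $F(x,v)+\langle v,y_i\rangle\geqslant F_i+\langle g^x_i,x-x_i\rangle+\langle g^y_i,y_i\rangle$, and taking the infimum over $v$ gives the subgradient inequality. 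Third, $g^y_i\in\partial_y f(x_i,y_i)$: the choice $v=g^y_i$ gives $f(x_i,y)\leqslant f_i+\langle g^y_i,y-y_i\rangle$.

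\emph{Main obstacle.} With the max-of-affine $F$, the infimum over $v$ is $-\infty$ whenever $y\notin\operatorname{conv}\{y_j\}$. So $f$ is not real-valued on all of $\sX\times\sY$ and does not belong to $\mathcal S$ as defined. My first attempt at a fix is to replace $F$ by a convex interpolant that grows superlinearly in $v$, for example
\[
\max\Bigl\{F,\ \max_j\bigl(F_j+\langle g^x_j,x-x_j\rangle-\langle y_j,v-g^y_j\rangle\bigr)+\phi\bigl(\operatorname{dist}(v,\operatorname{conv}\{g^y_j\})\bigr)\Bigr\},
\]
where $\phi$ is convex, vanishes on $[0,r]$ and grows quadratically beyond. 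This would leave a neighbourhood of the data untouched while making the infimum finite for every $y$. Convexity in $x$ and concavity in $y$ are unaffected by this change, but I still need to check two things: that the modification preserves both the value and the subgradient $(g^x_i,-y_i)$ at each $(x_i,g^y_i)$, and that the infimum is then attained and finite everywhere.
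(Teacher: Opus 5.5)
Your reading of the statement is right. The condition has no $\mu$-terms, and the paper itself uses it as the characterization of $\mathcal S$: in Lemma~\ref{lem:saddle_properties}, and, after the shift of Lemma~\ref{lem:monotonicity_pointwise}, to derive Theorem~\ref{teo:monotone_conditions}. So $\mathcal S_\mu$ there is a typo.

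The paper gives no proof of this theorem; it imports it from \cite{Krivchenko_Gasnikov_Kovalev_2024}. Your argument is therefore a self-contained proof, not a variant of one in the text. Your necessity argument, chaining through the mixed value $f(x_i,y_j)$, is complete. The sufficiency route through the partial conjugate in $y$ is correct. It reduces the saddle case to the convex interpolation theorem of \cite{taylor2017smooth}, and it makes clear why the condition has exactly this mixed form. Your three checks at the data points use only two facts about the convex interpolant: its value $F_i$ at $(x_i,g^y_i)$ and the subgradient $(g^x_i,-y_i)$ there. So any convex interpolant of the lifted data can be substituted.

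The obstacle you identify is real, and your fix resolves it. Both pending checks are short.

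\emph{Values and subgradients.} Since $\phi\geqslant 0$ and the inner max is $F$ itself, your modified function is simply $\widetilde F(x,v)=F(x,v)+\phi(\operatorname{dist}(v,C))$ with $C=\operatorname{conv}\{g^y_j\}$. It is convex because $\operatorname{dist}(\cdot,C)$ is convex and $\phi$ is convex and nondecreasing on $[0,\infty)$. Examples are $\phi(t)=\max\{t-r,0\}^2$, or even $\phi(t)=t^2/2$; the plateau is not needed. We have $\widetilde F\geqslant F$ everywhere, and $\widetilde F=F$ at each $(x_i,g^y_i)$ because $g^y_i\in C$. Hence every affine minorant of $F$ touching at $(x_i,g^y_i)$ is also one for $\widetilde F$, so values and subgradients are preserved.

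\emph{Finiteness of the infimum.} Fix $(x,y)$. Any single affine piece of the max bounds $F(x,v)+\langle v,y\rangle$ from below by an affine function of $v$. Meanwhile $\operatorname{dist}(v,C)\geqslant \|v\|_2-R$ with $R=\max_j\|g^y_j\|_2$, so the penalty grows quadratically. The objective is therefore continuous and coercive in $v$, and the infimum is attained and finite.

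It follows that $f$ is real-valued on all of $\sX\times\sY$. It is convex in $x$ as a partial minimization of a jointly convex function, and concave in $y$ as an infimum of affine functions. Your three checks then go through verbatim with $\widetilde F$ in place of $F$.

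One point to make explicit: the argument uses finiteness of $I$, through boundedness of $C$ and finiteness of the max. This is implicit in the statement. For infinite $I$ the condition alone does not guarantee a real-valued interpolant: take data from a convex function of $x$ whose subgradients blow up on a bounded set.
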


\begin{lemma} Let $f: \sX \times \sY \to \R$. The following statements are equivalent $\forall z_0, z_1 \in \sX \times \sY$:
\begin{align*}
    &(1): f \in \mathcal S, \\
    &(2): f(z_1) \geqslant f(z_0) + \langle g^x_0,x_1-x_0 \rangle + \langle g^y_1, y_1-y_0 \rangle, \\
    &(3): \langle g^z(z_1) - g^z(z_0),z_1-z_0 \rangle \geqslant 0,
\end{align*}
where $g^x_0 \in  \partial_x f(z_0)$, $g^x_1 \in  \partial_x f(z_1)$, $g^y_0 \in  \partial_y f(z_0)$ and $g^y_1 \in  \partial_y f(z_1)$.
\label{lem:saddle_properties}
\end{lemma}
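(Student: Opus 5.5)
The plan is to prove the cycle of implications $(1)\Rightarrow(2)\Rightarrow(3)\Rightarrow(1)$. For $(1)\Rightarrow(2)$ I would pass through the intermediate point $(x_1,y_0)$ and apply the subgradient inequalities of the two partial functions, convex in $x$ and concave in $y$.

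Since $x\mapsto f(x,y_0)$ is convex and $g^x_0\in\partial_x f(z_0)$, we get $f(x_1,y_0)\geqslant f(x_0,y_0)+\langle g^x_0,x_1-x_0\rangle$. Since $y\mapsto f(x_1,y)$ is concave and $g^y_1\in\partial_y f(z_1)$ is a supergradient, we get
\begin{equation*}
f(x_1,y_0)\leqslant f(x_1,y_1)+\langle g^y_1,y_0-y_1\rangle .
\end{equation*}
Chaining the two bounds gives $f(z_1)\geqslant f(z_0)+\langle g^x_0,x_1-x_0\rangle+\langle g^y_1,y_1-y_0\rangle$, which is (2).

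For $(2)\Rightarrow(3)$ I would write (2) for the ordered pair $(z_0,z_1)$ and again with the roles of $z_0$ and $z_1$ swapped, then add the two inequalities. The function values cancel and we are left with
\begin{equation*}
0\geqslant \langle g^x_0-g^x_1,x_1-x_0\rangle+\langle g^y_1-g^y_0,y_1-y_0\rangle .
\end{equation*}
This is exactly $\langle g^z(z_1)-g^z(z_0),z_1-z_0\rangle\geqslant 0$ once we recall that $g^z=[g^x,-g^y]$.

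The step $(3)\Rightarrow(1)$ is the main obstacle, because the subdifferentials only make sense if $f$ is already convex--concave. I would therefore read (3) as monotonicity of a selection $g=(g^x,g^y)$ of partial (sub/super)gradients of $f$, for instance partial gradients when $f$ is differentiable.
\begin{enumerate}
\item Fix $y$ and take $z_0=(x_0,y)$, $z_1=(x_1,y)$. Then (3) reduces to $\langle g^x(x_1,y)-g^x(x_0,y),x_1-x_0\rangle\geqslant 0$, so $g^x(\cdot,y)$ is a monotone gradient field of $f(\cdot,y)$.
\item A function whose gradient is monotone is convex. To see this, integrate along the segment: with $\varphi(t)=f(x_0+t(x_1-x_0),y)$, the derivative $\varphi'$ is nondecreasing by monotonicity, so $\varphi$ is convex. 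This works provided $f$ is absolutely continuous along lines, as it is when $f$ is differentiable.
\item Symmetrically, fixing $x$ makes $-g^y(x,\cdot)$ monotone, so $f(x,\cdot)$ is concave.
\end{enumerate}
Hence $f\in\mathcal S$. In the genuinely nonsmooth case I would state the regularity assumption explicitly, since without it (3) has no meaning independent of (1).
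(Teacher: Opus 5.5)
Your proof is correct. For $(2)\Rightarrow(3)$ and $(3)\Rightarrow(1)$ it follows the paper's argument. You add the two swapped instances of (2), and you restrict (3) to pairs with a common $y$ (resp.\ common $x$) to get monotonicity, hence convexity (resp.\ concavity), of the partial functions.

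The genuine difference is in $(1)\Rightarrow(2)$. The paper obtains $(1)\Leftrightarrow(2)$ by citing the nonsmooth interpolation result, Theorem~\ref{teo:nonsmooth_interpolation}. You instead prove $(1)\Rightarrow(2)$ directly by passing through $(x_1,y_0)$: you chain the subgradient inequality for $f(\cdot,y_0)$ at $x_0$ with the supergradient inequality for $f(x_1,\cdot)$ at $y_1$. You then get $(2)\Rightarrow(1)$ by closing the cycle through (3).

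Your route makes the lemma self-contained. Your two-line computation is in effect a direct proof of the necessity half of Theorem~\ref{teo:nonsmooth_interpolation}, and you never need its sufficiency half. It also shows why the mixed pairing ($g^x$ at $z_0$, $g^y$ at $z_1$) is the natural one. The paper's citation is shorter and ties the lemma to the interpolation framework it relies on afterwards.

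Your caveat on $(3)\Rightarrow(1)$ is a fair sharpening of a point the paper passes over. The paper simply asserts that monotonicity of $g^x(\cdot,y)$ is equivalent to convexity of $f(\cdot,y)$, without saying what $\partial_x f$ means before convexity is known.

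One small correction there: everywhere-differentiability does not by itself give absolute continuity along lines. You do not need it, though. If $\varphi$ is differentiable with $\varphi'$ nondecreasing, the mean value theorem already makes the difference quotients of $\varphi$ nondecreasing, so $\varphi$ is convex. Alternatively, a monotone $\varphi'$ is locally bounded, so $\varphi$ is locally Lipschitz and your integration argument goes through.
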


\begin{proof}
$[(1) \iff (2)]$: Follows from Theorem~\ref{teo:nonsmooth_interpolation}. \\
$[(2) \implies (3)]$: From $(2)$, we have
\begin{align*}
    &f(z_1) \geqslant f(z_0) + \langle g^x_0,x_1-x_0 \rangle + \langle g^y_1,y_1-y_0 \rangle, \\
    &f(z_0) \geqslant f(z_1) + \langle g^x_1, x_0-x_1 \rangle + \langle g^y_0,y_0-y_1\rangle. 
\end{align*}
By adding up the inequalities, we get $(3)$. \\
$[(3) \implies (1)]$: for any fixed $y$:
\begin{align*}
    \langle g^x(x_1,y) - g^x(x_0,y), x_1-x_0 \rangle \geqslant 0, 
\end{align*}
which is equivalent to $f(\cdot, y)$ being convex. The same way, $f(x, \cdot)$ is concave. It means $f \in \mathcal S$ by definition.
\end{proof}

\begin{lemma} We have $f \in \mathcal S_\mu$ if and only if $f - \frac{\mu}{2} \|x\|_2^2 + \frac{\mu}{2} \|y\|_2^2 \in \mathcal S$.
\label{lem:monotonicity_trasform}
\end{lemma}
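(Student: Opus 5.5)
The plan is to use the operator characterization from Lemma~\ref{lem:saddle_properties}, statement (3), and to observe that subtracting $\frac{\mu}{2}\|x\|_2^2 - \frac{\mu}{2}\|y\|_2^2$ from $f$ shifts the operator by exactly $\mu z$. Set $h(x,y) = f(x,y) - \frac{\mu}{2}\|x\|_2^2 + \frac{\mu}{2}\|y\|_2^2$. The quadratic $-\frac{\mu}{2}\|x\|_2^2$ is differentiable in $x$ and does not depend on $y$. Likewise $\frac{\mu}{2}\|y\|_2^2$ is differentiable in $y$ and does not depend on $x$. The partial subdifferentials therefore translate:
\begin{align*}
\partial_x h(z) = \partial_x f(z) - \mu x, \qquad \partial_y h(z) = \partial_y f(z) + \mu y.
\end{align*}
This relies on the sum rule with a differentiable term, applied to the partial functions $h(\cdot,y)$ and $h(x,\cdot)$. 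Consequently, if $g^z_f(z) = [g^x(z)^\top, -g^y(z)^\top]^\top$ is an operator of $f$, then
\begin{align*}
g^z_h(z) = [(g^x(z) - \mu x)^\top, -(g^y(z)+\mu y)^\top]^\top = g^z_f(z) - \mu z
\end{align*}
is an operator of $h$, and the converse correspondence also holds.

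Next, for any $z, w \in \sX \times \sY$ we compute
\begin{align*}
\langle g^z_h(w) - g^z_h(z), w - z\rangle = \langle g^z_f(w) - g^z_f(z), w-z\rangle - \mu\|w-z\|_2^2 .
\end{align*}
Hence $f$ is $\mu$-SCSC in the sense of Definition~\ref{fed:monotone} if and only if the operator of $h$ is monotone. By the equivalence $(1)\iff(3)$ of Lemma~\ref{lem:saddle_properties}, monotonicity of the operator of $h$ is equivalent to $h \in \mathcal S$, which proves the claim.

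The main obstacle is a technical one. Lemma~\ref{lem:saddle_properties} is stated for a given $f$ and presupposes that the subdifferentials exist, that is, that the function is already convex--concave. In the direction $f\in\mathcal S_\mu \Rightarrow h\in\mathcal S$, there is no difficulty: $f\in\mathcal S$ by definition of $\mathcal S_\mu$, so we only need to check that the partial subdifferentials of $h$ are nonempty and then apply $(3)\Rightarrow(1)$. I would handle this directly. For fixed $y$, monotonicity of $x\mapsto g^x(x,y)-\mu x$ along segments gives convexity of $h(\cdot,y)$, exactly as in the proof of $(3)\implies(1)$, and similarly in $y$. This avoids any circularity. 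In the other direction, $h\in\mathcal S$ makes $f = h + \frac{\mu}{2}\|x\|^2 - \frac{\mu}{2}\|y\|^2$ convex--concave, so its subdifferentials are the shifted ones and the identity above applies.
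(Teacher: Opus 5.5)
Your proposal is correct and follows the same route as the paper. The paper's one-line proof invokes Lemma~\ref{lem:saddle_properties} and Definition~\ref{fed:monotone}, and you spell this out: the operator of $h$ is $g^z_f - \mu z$, so strong monotonicity of $f$'s operator is exactly monotonicity of $h$'s operator, which by $(1)\iff(3)$ is equivalent to $h \in \mathcal S$. Your extra care with the sum rule and with convexity of $h(\cdot,y)$ in the forward direction fills in details that the paper leaves implicit.
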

\begin{proof}
    The statement directly follows from Lemma~\ref{lem:saddle_properties} and Definition~\ref{fed:monotone} of $\mu$-SCSC functions.
\end{proof}

The addition of $\frac{\mu}{2} \|x\|_2^2 + \frac{\mu}{2} \|y\|_2^2$ regularization naturally induces a pointwise transform.

\begin{lemma} Consider sequence $\widetilde{A}_I$. The following statements are equivalent
\begin{align*}
    &(1): \left\{(x_i,\ y_i,\ g^x_i,\ g^y_i,\ f_i)\right\}_{i \in I} \text{ is } \mathcal S_\mu-\text{interpolable},\\
    &(2): \left\{\left(x_i,\ y_i,\ g^x_i - \mu x_i,\ g^y_i + \mu y_i,\ f_i - \frac{\mu\|x_i\|_2^2}{2} + \frac{\mu\|y_i\|_2^2}{2}\right)\right\}_{i \in I} \text{ is } \mathcal S-\text{interpolable} . 
\end{align*}
\label{lem:monotonicity_pointwise}
\end{lemma}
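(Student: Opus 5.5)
The plan is to move interpolating functions back and forth through the quadratic shift $q(x,y) = \frac{\mu}{2}\|x\|_2^2 - \frac{\mu}{2}\|y\|_2^2$. Lemma~\ref{lem:monotonicity_trasform} already says that $f \in \mathcal S_\mu$ if and only if $h := f - q \in \mathcal S$. What remains is to check that this shift acts on the data $(x_i,y_i,g^x_i,g^y_i,f_i)$ exactly as the pointwise transform in statement $(2)$.

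The key computation concerns partial subdifferentials. Since $q$ is differentiable, with $\nabla_x q = \mu x$ and $\nabla_y q = -\mu y$, we get
\[
\partial_x h(x,y) = \partial_x f(x,y) - \mu x, \qquad \partial_y h(x,y) = \partial_y f(x,y) + \mu y .
\]
For $\partial_x$ this is the standard sum rule for a convex function plus a smooth one: the convex function here is $f(\cdot,y)$ in the $\mathcal S_\mu$ direction, or $h(\cdot,y)$ in the reverse direction. For $\partial_y$ the same rule applies to the concave parts, using superdifferentials of $-f(x,\cdot)$ and $-h(x,\cdot)$. Function values transform by $h(x_i,y_i) = f_i - \frac{\mu}{2}\|x_i\|_2^2 + \frac{\mu}{2}\|y_i\|_2^2$.

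$(1)\Rightarrow(2)$: Take $f \in \mathcal S_\mu$ interpolating the data and set $h = f - q$. By Lemma~\ref{lem:monotonicity_trasform}, $h \in \mathcal S$. By the identities above, $g^x_i - \mu x_i \in \partial_x h(x_i,y_i)$, $g^y_i + \mu y_i \in \partial_y h(x_i,y_i)$, and the values of $h$ at the points match the transformed $f_i$. So the transformed sequence is $\mathcal S$-interpolable.

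$(2)\Rightarrow(1)$: Take $h \in \mathcal S$ interpolating the transformed data and set $f = h + q$. Lemma~\ref{lem:monotonicity_trasform} gives $f \in \mathcal S_\mu$. The same sum rule, now adding $\mu x$ and subtracting $\mu y$, recovers the original $g^x_i, g^y_i, f_i$.

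The only step needing any care is the subdifferential sum rule in the concave variable. One has to keep the sign convention straight: $\partial_y$ of a concave function is read as a superdifferential, so subtracting the concave quadratic $-\frac{\mu}{2}\|y\|^2$ shifts gradients by $+\mu y$. Once that is fixed, both directions are symmetric and immediate.
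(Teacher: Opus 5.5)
Your proposal is correct and follows the same route as the paper. The paper's proof just invokes Lemma~\ref{lem:monotonicity_trasform} for the quadratic shift $\frac{\mu}{2}\|x\|_2^2-\frac{\mu}{2}\|y\|_2^2$. You additionally spell out that this shift moves the partial sub/superdifferentials by $-\mu x$ and $+\mu y$ and the function values by the matching amount, which is exactly the pointwise transform in (2).
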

\begin{proof}
The statement follows from Lemma~\ref{lem:monotonicity_trasform}.
\end{proof}

\begin{theorem} [$\mathcal S_\mu$-interpolation] Sequence $\widetilde{A}_I$ is $\mathcal S_\mu$-interpolable if and only if $\forall i,j \in I$:
\begin{equation}
    f_i \geqslant f_j + \langle g^x_j,x_i-x_j \rangle + \langle g^y_i, y_i-y_j\rangle + \frac{\mu \|x_i-x_j\|_2^2}{2} + \frac{\mu \|y_i-y_j\|_2^2}{2}.
\end{equation}
\label{teo:monotone_conditions}
\end{theorem}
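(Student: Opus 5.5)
The plan is to reduce the statement to Theorem~\ref{teo:nonsmooth_interpolation} through the pointwise transform of Lemma~\ref{lem:monotonicity_pointwise}. Given $\widetilde{A}_I$, form the transformed data
\begin{align*}
\tilde g^x_i = g^x_i - \mu x_i,\quad \tilde g^y_i = g^y_i + \mu y_i,\quad \tilde f_i = f_i - \frac{\mu\|x_i\|_2^2}{2} + \frac{\mu\|y_i\|_2^2}{2}.
\end{align*}
By Lemma~\ref{lem:monotonicity_pointwise}, $\widetilde{A}_I$ is $\mathcal S_\mu$-interpolable if and only if the transformed sequence is $\mathcal S$-interpolable. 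By Theorem~\ref{teo:nonsmooth_interpolation} (read with $\mu=0$, i.e.\ plain $\mathcal S$-interpolation), this holds if and only if, for all $i,j\in I$,
\begin{align*}
\tilde f_i \geqslant \tilde f_j + \langle \tilde g^x_j, x_i - x_j\rangle + \langle \tilde g^y_i, y_i - y_j\rangle .
\end{align*}
So it remains to show that this inequality is algebraically identical to the one in the statement.

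For that, I substitute the definitions and treat the $x$-terms and $y$-terms separately. The $x$-part contributes
\begin{align*}
-\frac{\mu}{2}\|x_i\|_2^2 + \frac{\mu}{2}\|x_j\|_2^2 + \mu\langle x_j, x_i - x_j\rangle = -\frac{\mu}{2}\|x_i - x_j\|_2^2
\end{align*}
when moved to the right-hand side. Since the term $\langle x_j, x_i-x_j\rangle$ enters with $-\mu$ inside $\tilde g^x_j$, after rearranging it produces $+\frac{\mu}{2}\|x_i-x_j\|_2^2$ on the right of the original inequality.

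For the $y$-part, the gradient $\tilde g^y_i$ is evaluated at the point $i$, which is the same index that carries the $+\frac{\mu}{2}\|y_i\|_2^2$ term in $\tilde f_i$. Collecting the terms gives
\begin{align*}
\frac{\mu}{2}\|y_i\|_2^2 - \frac{\mu}{2}\|y_j\|_2^2 - \mu\langle y_i, y_i - y_j\rangle = -\frac{\mu}{2}\|y_i - y_j\|_2^2,
\end{align*}
which again moves to the right as $+\frac{\mu}{2}\|y_i-y_j\|_2^2$. Combining both parts yields exactly the inequality of the statement. Because every step is an equivalence, both directions of the ``if and only if'' follow at once.

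The only delicate step is the sign bookkeeping for the $y$-block. The concave part flips the sign of the regularizer, and the gradient there is taken at $z_i$ rather than $z_j$; one has to check that these two asymmetries combine to give a positive $\frac{\mu}{2}\|y_i-y_j\|_2^2$ rather than a negative one. The completed-square identity above confirms that they do. Beyond this, one must make sure that Theorem~\ref{teo:nonsmooth_interpolation} is applied in its $\mathcal S$ form (with $\mu=0$), which is how Lemma~\ref{lem:saddle_properties} already uses it.
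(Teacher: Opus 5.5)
Your proposal is correct and follows the paper's proof exactly: you transform the data pointwise via Lemma~\ref{lem:monotonicity_pointwise}, apply the plain $\mathcal S$-interpolation conditions of Theorem~\ref{teo:nonsmooth_interpolation} (you rightly read its ``$\mathcal S_\mu$'' as $\mathcal S$), and expand. The paper leaves the completing-the-square step implicit, and your residuals $-\frac{\mu}{2}\|x_i-x_j\|_2^2$ and $-\frac{\mu}{2}\|y_i-y_j\|_2^2$ are correct, so every step is indeed an equivalence.
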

\begin{proof}
By applying Theorem~\ref{teo:nonsmooth_interpolation} to the $\mathcal S$-interpolable sequence from Lemma~\ref{lem:monotonicity_pointwise} we prove the statement.
\end{proof}

\subsection{Difference of quadratic functions}

Consider a rather trivial class $\mathcal S^q_\mu$:

\begin{definition} We say that $f \in \mathcal S^q_\mu$ ($0 < \mu < \infty$) if $f \in \mathcal S$ and $\forall z_0,z_1 \in \sX \times \sY$, it satisfies
\begin{align}
    &\langle g^z(z_1) - g^z(z_0), z_1 -z_0 \rangle = \frac{1}{\mu}\|g^z(z_1)-g^z(z_0)\|^2_2, \label{eq:quad1} \\
    &\langle g^z(z_1) - g^z(z_0), z_1 - z_0 \rangle = \mu \|z_1-z_2\|^2_2. \label{eq:quad2}
\end{align}
\end{definition}

Class $\mathcal S^q_\mu$ is also an extreme case of $\mathcal S_{\mu, L}$ (and $\mathcal S^{cc}_{\mu, L} \subset \mathcal S_{\mu, L}$) when $\mu = L$ because
\begin{align*}
    \mu \|\Delta z \|^2 \leqslant \langle \Delta g, \Delta z \rangle \leqslant \|\Delta g\| \|\Delta z\| \leqslant \mu \|\Delta z \|^2
\end{align*}

\begin{lemma} Class $S^q_\mu$ is a class of all functions of the following form ($\mu > 0$):
\begin{equation}
    f(x,y) = \frac{\mu\|x\|_2^2}{2} - \frac{\mu\|y\|_2^2}{2} + \langle a,x\rangle + \langle b,y\rangle + c,
\label{eq:saddle_quadratic}
\end{equation}
where $a \in \sX$, $b \in \sY$ and $d \in \R$.
\label{lem:saddle_quadratic_class}
\end{lemma}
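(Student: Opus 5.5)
We prove both inclusions. The easy one is that every $f$ of the form \eqref{eq:saddle_quadratic} lies in $\mathcal S^q_\mu$. Such an $f$ is smooth, with
\begin{align*}
g^x(z) = \mu x + a, \qquad g^y(z) = -\mu y + b .
\end{align*}
Hence the operator is $g^z(z) = [\mu x + a;\ \mu y - b] = \mu z + \text{const}$, so $g^z(z_1)-g^z(z_0) = \mu(z_1-z_0)$. Substituting this into \eqref{eq:quad1} and \eqref{eq:quad2}, both sides of each equal $\mu\|z_1-z_0\|_2^2$. (We read the $z_2$ in \eqref{eq:quad2} as the typo $z_0$, and the constant in \eqref{eq:saddle_quadratic} as $c$.) Convex--concavity is immediate, since $f(\cdot,y)$ is a convex quadratic and $f(x,\cdot)$ is concave.

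For the converse, take $f \in \mathcal S^q_\mu$ and write $\Delta g = g^z(z_1)-g^z(z_0)$, $\Delta z = z_1-z_0$. Equations \eqref{eq:quad1} and \eqref{eq:quad2} give
\begin{align*}
\|\Delta g - \mu \Delta z\|_2^2 = \|\Delta g\|_2^2 - 2\mu\langle \Delta g,\Delta z\rangle + \mu^2\|\Delta z\|_2^2 = \mu\langle\Delta g,\Delta z\rangle - 2\mu\langle\Delta g,\Delta z\rangle + \mu\langle\Delta g,\Delta z\rangle = 0 .
\end{align*}
Therefore $g^z(z_1) - \mu z_1 = g^z(z_0) - \mu z_0$ for every pair of points and every choice of subgradients. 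This pins down the operator as $g^z(z) = \mu z + v$ for a fixed vector $v = [a;\,-b]$. In particular each subdifferential $\partial_x f(z)$ and $\partial_y f(z)$ is a singleton, so the partial gradients exist everywhere, with $\nabla_x f = \mu x + a$ and $\nabla_y f = -\mu y + b$.

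Next we integrate. Set $h = f - \bigl(\tfrac{\mu}{2}\|x\|^2 - \tfrac{\mu}{2}\|y\|^2 + \langle a,x\rangle + \langle b,y\rangle\bigr)$. For each fixed $y$, $h(\cdot,y)$ is convex with gradient identically zero, so it is constant in $x$. Likewise, for each fixed $x$, $h(x,\cdot)$ is concave with zero gradient, so it is constant in $y$. Hence $h(x,y) = h(0,y) = h(0,0) =: c$, which gives exactly the form \eqref{eq:saddle_quadratic}.

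The step that needs the most care is the move from ``subgradient zero'' to ``constant''. Since $f$ is only assumed convex--concave, we cannot appeal to the fundamental theorem of calculus in two variables directly. Instead we use the one-dimensional fact that a convex (respectively concave) function whose subdifferential is $\{0\}$ everywhere attains its minimum (respectively maximum) at every point, and is therefore constant. Applying this separately in each block avoids any joint differentiability assumption.
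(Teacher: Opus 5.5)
Your proof is correct and follows essentially the same route as the paper: the identity $\|\Delta g-\mu\Delta z\|_2^2=0$ repackages the paper's Cauchy--Schwarz equality-case argument, after which both proofs read off the affine operator $g^z(z)=\mu z+v$, integrate, and check the converse directly. One minor slip is in the integration step, where $h(\cdot,y)$ is not convex a priori because you subtract a convex quadratic from a convex function; you do not need convexity there, since $f(\cdot,y)$ is differentiable with gradient $\mu x+a$, so $h(\cdot,y)$ has zero gradient and is constant by the mean value theorem along segments, and likewise for $h(0,\cdot)$.
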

\begin{proof}
Let $f \in \mathcal S^q_\mu$. From \eqref{eq:quad1} and \eqref{eq:quad2}, we have
\begin{align}
    \|g^z(z_1) - g^z(z_0)\|_2 = \mu \|z_1-z_0\|_2
    \label{eq:quad3}
\end{align}
for all $z_0,z_1 \in \sX \times \sY$. We also have
\begin{align*}
    \langle g^z(z_1) - g^z(z_0), z_1 -z_0 \rangle = \mu \|z_1-z_0\|_2^2 = \|g^z(z_1) - g^z(z_0)\|_2 \|z_1-z_0\|_2.
\end{align*}
Therefore, $\Delta g^z \| \Delta z$ and
\begin{align*}
    g^z(z_1) - g^z(z_0) = \lambda(z_1,z_0) \left(z_1 - z_0\right)
\end{align*}
 where $\lambda = \mu$ due to \eqref{eq:quad3}. In other words,
 \begin{align*}
     \nabla_x f = \mu x + a, \quad \nabla_y f = -\mu y + b,
 \end{align*}
where $a \in \sX$, $b \in \sY$ are some vectors. By integrating, we get \eqref{eq:saddle_quadratic}. On the other hand, if $f$ is a function of the form \eqref{eq:saddle_quadratic} then $f \in \mathcal S^q_\mu$ from definition.
\end{proof}

To interpolate $\mathcal S^q_\mu$, we can use known \cite{rubbens2024constraint} interpolation conditions for $\mathcal F_{\mu,\mu}$ which is all functions of the form
\begin{equation*}
    f(x) = \frac{\mu\|x\|_2^2}{2} + \langle a, x \rangle + b,
\end{equation*}
where $a \in \R^d$, $b \in \R$. Any function from $\mathcal S^q_\mu$ is a difference of two functions from $\mathcal F_{\mu, \mu}$.

\begin{theorem} [Quadratic interpolation~\cite{rubbens2024constraint}] Sequence $\left\{(x_i, g^x_i, f_i)\right\}_{i \in I}$ is $\mathcal F_{\mu, \mu}$-interpolable if and only if $\forall i,j \in I$:
\begin{align*}
    & f_i = f_j + \frac{1}{2}\langle g^x_i + g^x_j,x_i-x_j\rangle,\\
    & g^x_i - g^x_j = \mu(x_i-x_j).
\end{align*}
\label{teo:quadratic_interpolation}
\end{theorem}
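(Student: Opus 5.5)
Both directions of Theorem~\ref{teo:quadratic_interpolation} can be checked directly: $\mathcal F_{\mu,\mu}$ is exactly the family $f(x)=\frac{\mu}{2}\|x\|_2^2+\langle a,x\rangle+b$, so interpolability means there are $a\in\R^d$, $b\in\R$ with $g^x_i=\mu x_i+a$ and $f_i=\frac{\mu}{2}\|x_i\|_2^2+\langle a,x_i\rangle+b$ for all $i\in I$. The whole argument is elementary algebra about quadratics.

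\emph{Necessity.} Let $f(x)=\frac{\mu}{2}\|x\|_2^2+\langle a,x\rangle+b$ interpolate the data. This $f$ is differentiable, so $g^x_i=\nabla f(x_i)=\mu x_i+a$. Subtracting for two indices gives $g^x_i-g^x_j=\mu(x_i-x_j)$.

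For the function values, use that $f$ is quadratic: for any quadratic, the trapezoidal rule along a segment is exact. Concretely,
\[
f(x_i)-f(x_j)=\int_0^1\langle \nabla f(x_j+t(x_i-x_j)),x_i-x_j\rangle\,dt .
\]
The integrand is affine in $t$, so the integral equals the average of its endpoint values, $\frac12\langle g^x_i+g^x_j,x_i-x_j\rangle$. One can also check this by expanding $\frac{\mu}{2}(\|x_i\|^2-\|x_j\|^2)=\frac{\mu}{2}\langle x_i+x_j,x_i-x_j\rangle$.

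\emph{Sufficiency.} Assume both equalities hold for all $i,j$. If $I$ is empty the statement is trivial; otherwise fix $j_0\in I$. Set $a:=g^x_{j_0}-\mu x_{j_0}$. The gradient condition then gives $g^x_i=\mu x_i+a$ for every $i$. Next choose $b$ so that $f(x_{j_0})=f_{j_0}$, where $f(x)=\frac{\mu}{2}\|x\|_2^2+\langle a,x\rangle+b$.

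For any other $i$, the first condition with $j=j_0$ expresses $f_i$ as $f_{j_0}+\frac12\langle g^x_i+g^x_{j_0},x_i-x_{j_0}\rangle$. By the trapezoidal identity from the necessity part, this is exactly $f(x_i)$. Hence $f\in\mathcal F_{\mu,\mu}$ satisfies $g^x_i=\nabla f(x_i)\in\partial f(x_i)$ and $f_i=f(x_i)$ for all $i$, which is interpolation.

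The only step needing care is the identification of $\mathcal F_{\mu,\mu}$ with this family of quadratics, which the paper states just before the theorem. Its justification mirrors Lemma~\ref{lem:saddle_quadratic_class}: $\mu$-strong convexity together with $\mu$-smoothness forces $\nabla f(x)-\mu x$ to be constant. Granting this, no step is a real obstacle; everything is a direct computation.
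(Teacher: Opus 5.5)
Your proof is correct and complete. The paper gives no proof of this statement to compare against: it imports the result from \cite{rubbens2024constraint} and uses it as a black box in the proof of Theorem~\ref{teo:saddle_quadratic_interpolation}. Your argument is therefore a self-contained replacement for that citation.

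All the ingredients check out:
\begin{itemize}
\item the identification of $\mathcal F_{\mu,\mu}$ with the family $\frac{\mu}{2}\|x\|_2^2+\langle a,x\rangle+b$, which the paper asserts and which you justify by the same Cauchy--Schwarz equality argument as in Lemma~\ref{lem:saddle_quadratic_class};
\item the fact that the subdifferential of a differentiable convex function is $\{\nabla f\}$, which you need in the necessity direction;
\item the exactness of the trapezoidal rule for quadratics.
\end{itemize}

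Your route also yields two small by-products. First, sufficiency uses only the conditions against a single anchor $j_0$. Combined with necessity, this shows the full pairwise system is redundant: the anchored conditions already imply all the others. Second, your explicit interpolant explains the choice $p_i=\frac{1}{2\mu}\|g^x_i\|_2^2$ made later in the proof of Theorem~\ref{teo:saddle_quadratic_interpolation}. With $g^x_i=\mu x_i+a$ one has $\frac{\mu}{2}\|x_i\|_2^2+\langle a,x_i\rangle+b=\frac{1}{2\mu}\|g^x_i\|_2^2+b-\frac{1}{2\mu}\|a\|_2^2$, so that choice is just $b=\frac{1}{2\mu}\|a\|_2^2$.

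The citation places the result inside a general constraint-based interpolation framework. For this degenerate class, though, your direct computation is all that is needed.
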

\begin{theorem} Sequence $\widetilde{A}_I$ is $\mathcal S^q_\mu$-interpolable if and only if $\forall i,j \in I$:
\begin{align}
    & f_i = f_j +\frac{1}{2}\langle g^x_i + g^x_j,x_i-x_j\rangle + \frac{1}{2}\langle g^y_i + g^y_j,y_i-y_j\rangle, \label{eq:quad_int1}\\
    & g^x_i-g^x_j = \mu(x_i-x_j), \label{eq:quad_int2}\\
    & g^y_i-g^y_j = -\mu(y_i-y_j). \label{eq:quad_int3}
\end{align}
\label{teo:saddle_quadratic_interpolation}
\end{theorem}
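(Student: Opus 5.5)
The plan is to split $f\in\mathcal S^q_\mu$ into an $x$-part and a $y$-part via Lemma~\ref{lem:saddle_quadratic_class}, and reduce both directions to Theorem~\ref{teo:quadratic_interpolation}. By that lemma, every $f\in\mathcal S^q_\mu$ has the form $f(x,y)=\phi(x)-\psi(y)$ with
\begin{equation*}
\phi(x)=\tfrac{\mu}{2}\|x\|_2^2+\langle a,x\rangle+c, \qquad \psi(y)=\tfrac{\mu}{2}\|y\|_2^2-\langle b,y\rangle.
\end{equation*}
Both $\phi$ and $\psi$ belong to $\mathcal F_{\mu,\mu}$. Their gradients are $\nabla\phi(x)=g^x$ and $\nabla\psi(y)=-g^y$.

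\emph{Necessity.} Suppose $\widetilde A_I$ is interpolated by such an $f$. Then $g^x_i=\mu x_i+a$ and $g^y_i=-\mu y_i+b$, and subtracting these for indices $i,j$ gives \eqref{eq:quad_int2} and \eqref{eq:quad_int3}. For \eqref{eq:quad_int1}, I would use that for a quadratic with Hessian $\mu I$ the trapezoid rule is exact:
\begin{equation*}
\phi(x_i)-\phi(x_j)=\tfrac12\langle\nabla\phi(x_i)+\nabla\phi(x_j),x_i-x_j\rangle .
\end{equation*}
Equivalently, this is the first condition of Theorem~\ref{teo:quadratic_interpolation} applied to $\phi$ and to $\psi$. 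Subtracting the $\psi$ identity, with $\nabla\psi=-g^y$, turns the minus sign into $+\tfrac12\langle g^y_i+g^y_j,y_i-y_j\rangle$, which is exactly \eqref{eq:quad_int1}.

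\emph{Sufficiency.} This direction needs the real work, because we are only given the combined values $f_i$ and not separate values for $\phi$ and $\psi$. We therefore have to build them.

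First, fix a reference index $j_0\in I$. Define $a:=g^x_{j_0}-\mu x_{j_0}$ and $b:=g^y_{j_0}+\mu y_{j_0}$. By \eqref{eq:quad_int2} and \eqref{eq:quad_int3}, $g^x_i=\mu x_i+a$ and $g^y_i=-\mu y_i+b$ hold for every $i\in I$.

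Next, take the candidate $f$ of the form \eqref{eq:saddle_quadratic} with these $a,b$, and choose $c$ so that $f(x_{j_0},y_{j_0})=f_{j_0}$. The gradients then match at every point by construction. For the function values, note that $f$ itself satisfies the trapezoid identity computed in the necessity step. Hence $f(x_i,y_i)-f_{j_0}$ equals the right-hand side of \eqref{eq:quad_int1} with $j=j_0$, and that right-hand side equals $f_i-f_{j_0}$ by hypothesis. So $f(x_i,y_i)=f_i$ for all $i\in I$.

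Finally, Lemma~\ref{lem:saddle_quadratic_class} shows that this $f$ lies in $\mathcal S^q_\mu$, which completes the proof.

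I expect the main obstacle to be this sufficiency step, in particular the fact that $f_i$ cannot be split into $\phi$- and $\psi$-values, so Theorem~\ref{teo:quadratic_interpolation} cannot be applied directly. I avoid the issue by constructing $f$ explicitly and checking function values only against the anchor index $j_0$. The check relies on two facts: the trapezoid identity holds exactly for quadratics with Hessians $\pm\mu I$, and the gradient conditions pin down $a$ and $b$ consistently across all indices.
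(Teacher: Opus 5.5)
Your proof is correct. Your necessity argument is the verification the paper dismisses as ``straightforward to verify''. The sufficiency direction, however, takes a different route.

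You build the interpolant in closed form:
\begin{itemize}
\item \eqref{eq:quad_int2} and \eqref{eq:quad_int3} against one anchor $j_0$ fix $a$ and $b$;
\item $f_{j_0}$ fixes $c$;
\item the trapezoid rule is exact for the quadratic parts, so together with \eqref{eq:quad_int1} for the pairs $(i,j_0)$ it gives $f(x_i,y_i)=f_i$.
\end{itemize}

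The paper performs exactly the split you considered unavailable. It takes $p_i:=\frac{1}{2\mu}\|g^x_i\|_2^2$, which by \eqref{eq:quad_int2} solves $p_i-p_j=\frac{1}{2}\langle g^x_i+g^x_j,x_i-x_j\rangle$ for all $i,j$. It then sets $q_i:=p_i-f_i$. By \eqref{eq:quad_int1} and \eqref{eq:quad_int3}, the sequence $\{(y_i,-g^y_i,q_i)\}_{i\in I}$ then satisfies the conditions of Theorem~\ref{teo:quadratic_interpolation}. Two applications of that theorem give $p,q\in\mathcal F_{\mu,\mu}$, and the interpolant is $f(x,y):=p(x)-q(y)$. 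So the $f_i$ can be split, just not uniquely: the $x$-parts are determined by the gradient data only up to a common additive constant, and that freedom is harmless.

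What each route buys:
\begin{itemize}
\item \emph{Yours} is more elementary and self-contained. For sufficiency it uses only the trivial direction of Lemma~\ref{lem:saddle_quadratic_class}, not the cited $\mathcal F_{\mu,\mu}$ interpolation result. It exhibits the interpolant explicitly. It also shows in passing that the conditions against a single anchor index already imply them for all pairs.
\item \emph{The paper's} is modular. It reduces $\mathcal S^q_\mu$-interpolation to two convex interpolation problems, in the spirit of viewing convex--concave functions as differences of convex ones.
\end{itemize}
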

\begin{proof}
It's straightforward to verify that any $f \in \mathcal S^q_\mu$ satisfies the conditions which proves necessity. 

Next, we prove sufficiency. Suppose that $\widetilde{A}_I$ satisfies the conditions. Consider a system of linear equations
\begin{equation}
p_i - p_j = \frac{1}{2}\langle g^x_i + g^x_j,x_i-x_j\rangle, \quad \forall i,j \in I,
\label{eq:lin_eq_p}
\end{equation}
where $\{p_i\}_{i\in I}$ are variables. By substituting \eqref{eq:quad_int2} into \eqref{eq:lin_eq_p}, we get $\forall i,j \in I$:
\begin{align}
    p_i - p_j = \frac{1}{2\mu}\left(\|g^x_i\|_2^2 - \|g^x_j\|_2^2\right)
\end{align}
Thus, system \eqref{eq:lin_eq_p} has a family of solutions with one example being $p_i := \frac{1}{2\mu}\|g^x_i\|_2^2, \ \forall i\in I$. By Theorem~\ref{teo:quadratic_interpolation}, sequence $\left\{(x_i, g^x_i, p_i)\right\}_{i \in I}$ can be interpolated by some $p \in \mathcal F_{\mu, \mu}$. Next, by substituting \eqref{eq:lin_eq_p} into \eqref{eq:quad_int1}, we obtain $\forall i,j \in I$:
\begin{align}
    q_i = q_j - \frac{1}{2}\langle g^y_i + g^y_j,y_i-y_j\rangle
\label{eq:lin_eq_q}
\end{align}
where we introduced $\{q_i\}_{i\in I}$ such that $\forall i \in I: q_i := p_i - f_i$. By substituting \eqref{eq:quad_int3} into \eqref{eq:lin_eq_q} and using the same argument, we have that the sequence $\left\{(y_i, -g^y_i,\ q_i)\right\}_{i \in I}$ is interpolable by some $q \in \mathcal F_{\mu, \mu}$. 

Finally, we define $f(x,y) := p(x) - q(y)$. One can see that $f \in \mathcal S^q_\mu$ and $f$ correctly interpolates the original sequence $\widetilde{A}_I$.
\end{proof}

\subsection{Correspondence between functions with Lipschitz and cocoercive operators}

\begin{theorem} $f \in \mathcal S^{cc}_{\mu + L,\ 2L}$ if and only if $f - \frac{L}{2}\|x\|_2^2 + \frac{L}{2}\|y\|_2^2 \in \mathcal S_{\mu, L}$.
\label{teo:smooth_cocoercive}
\end{theorem}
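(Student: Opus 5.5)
The plan is to work entirely at the level of operators. Write $h := f - \frac{L}{2}\|x\|_2^2 + \frac{L}{2}\|y\|_2^2$. Then the operator of $h$ satisfies $g_h^z(z) = g_f^z(z) - L z$, since $\partial_x h = \partial_x f - Lx$ and $-\partial_y h = -\partial_y f - Ly$. The statement then reduces to an algebraic equivalence between two pairs of pointwise inequalities. Fix $z,w$ and set $\Delta z = w - z$, $\Delta g = g_f^z(w)-g_f^z(z)$, $\Delta h = \Delta g - L\Delta z$.

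First I treat monotonicity. By Definition~\ref{fed:monotone}, $f$ is $(\mu+L)$-SCSC iff $\langle \Delta g,\Delta z\rangle \geqslant (\mu+L)\|\Delta z\|^2$ for all $z,w$. This is the same as $\langle \Delta h,\Delta z\rangle \geqslant \mu\|\Delta z\|^2$, i.e.\ $h$ is $\mu$-SCSC. Equivalently, by Lemma~\ref{lem:monotonicity_trasform}, both statements say that $f - \frac{\mu+L}{2}\|x\|^2 + \frac{\mu+L}{2}\|y\|^2 \in \mathcal S$. In particular $f \in \mathcal S$ iff $h \in \mathcal S$, so the convex--concave membership needed in Definitions~\ref{fed:smooth} and~\ref{fed:cocoercive} transfers as well.

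Next I treat the key identity relating cocoercivity and smoothness. Expanding,
\begin{align*}
\langle \Delta g,\Delta z\rangle - \frac{1}{2L}\|\Delta g\|^2
&= \langle \Delta h + L\Delta z,\Delta z\rangle - \frac{1}{2L}\|\Delta h + L\Delta z\|^2 \\
&= \frac{L}{2}\|\Delta z\|^2 - \frac{1}{2L}\|\Delta h\|^2 .
\end{align*}
Hence $\frac{1}{2L}$-cocoercivity of $g_f^z$ for this pair is exactly $\|\Delta h\| \leqslant L\|\Delta z\|$, which is $L$-Lipschitzness of $g_h^z$ for the same pair. Because the equivalence holds pair by pair, quantifying over all $z,w$ gives the full equivalence. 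The condition $0<\mu<L$ in the class definitions becomes $\mu+L<2L$, which holds, so both classes are well defined.

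The only subtle point is subdifferential multivaluedness. Both properties force single-valuedness: for fixed $z=w$, Lipschitzness gives $\Delta h=0$, and cocoercivity gives $\|\Delta g\|^2\le 0$. So operators are well-defined gradients, and the argument needs no more than this remark. I do not expect a real obstacle; the main step is the completing-the-square identity above.
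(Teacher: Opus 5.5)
Your proposal is correct and follows essentially the same route as the paper: you shift the operator by $Lz$, transfer $(\mu+L)$-strong monotonicity to $\mu$-strong monotonicity, and use the completing-the-square identity to turn $\frac{1}{2L}$-cocoercivity into $L$-Lipschitzness. This is exactly the paper's simplification, and your remarks on the convex--concave membership transfer and single-valuedness add a bit of rigor that the paper leaves implicit.
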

\begin{proof}
Let $f \in \mathcal S^{cc}_{\mu + L,\ 2L}$. By definition it means that $\forall z_0, z_1 \in \sX \times \sY$:
\begin{align*}
    &\langle g_z(z_1)-g_z(z_0),z_1-z_0 \rangle \geqslant \frac{1}{2L}\|g_z(z_1)-g_z(z_0)\|_2^2, \\
    &\langle g_z(z_1)-g_z(z_0),z_1-z_0 \rangle \geqslant  \left(\mu + L\right)\|z_1 - z_0\|_2^2.
\end{align*}
Consider $\phi(x,y) := f(x,y) - \frac{L}{2}\|x\|_2^2 + \frac{L}{2}\|y\|_2^2$. Its operator $\overline{g_z}(z) = [\nabla_x \phi(z)^\top, -\nabla_y \phi(z)^\top]^\top$ satisfies
\begin{align*}
    &\langle \overline{g_z}(z_1)-\overline{g_z}(z_0),z_1-z_0\rangle  + L\|z_1-z_0\|_2^2 \geqslant \frac{1}{2L}\|\overline{g_z}(z_1)-\overline{g_z}(z_0) + L(z_1 - z_0)\|_2^2, \\
    &\langle \overline{g_z}(z_1)-\overline{g_z}(z_0),z_1-z_0 \rangle \geqslant \mu \|z_1 - z_0\|_2^2.
\end{align*}
After simplification of the first inequality
\begin{align*}
    &\langle \overline{g_z}(z_1)-\overline{g_z}(z_0),z_1-z_0\rangle  + L\|z_1-z_0 \|_2^2 \\&\quad \geqslant \frac{1}{2L}\|\overline{g_z}(z_1)-\overline{g_z}(z_0)\|_2^2 + \langle \overline{g_z}(z_1)-\overline{g_z}(z_0),z_1-z_0 \rangle + \frac{L}{2}\|z_1-z_0\|_2^2,
\end{align*}
the expression reduces to 
\begin{align*}
    \|\overline{g_z}(z_1)-\overline{g_z}(z_0)\|_2 \leqslant L\|z_1-z_0\|_2.
\end{align*}
Together with the second inequality, we have
\begin{align*}
    &\|\overline{g_z}(z_1)-\overline{g_z}(z_0)\|_2 \leqslant L\|z_1-z_0\|_2, \\
    &\langle \overline{g_z}(z_1)-\overline{g_z}(z_0), z_1-z_0\rangle \geqslant \mu \|z_1-z_0\|_2^2.
\end{align*}
Thus, $\phi \in \mathcal S_{\mu, L}$ by definition. Similarly, by letting $\phi \in \mathcal S_{\mu, L}$ and performing all steps in reverse, we prove $f \in \mathcal S^{cc}_{\mu + L,\ 2L}$.
\end{proof}

If we get inequalities for one of the classes, we can get inequalities for the other class with Theorem~\ref{teo:smooth_cocoercive}.

\section{Tighter constraints for smooth convex--concave functions}

Among all equivalent characterizations of a function class, the interpolation conditions are the strictest when imposed on a finite set of points \cite{rubbens2023interpolation,rubbens2024constraint}. From this perspective, we aim to construct the strictest possible characterizations for $\mathcal S_{\mu, L}$ and $\mathcal S^{cc}_{\mu, L}$. 

Let $f \in \mathcal S_{\mu, L}$. By combining the interpolation conditions for $\mathcal S_{\mu}$ (Theorem~\ref{teo:monotone_conditions}) with Definition~\ref{fed:smooth}, we get $\forall z_0, z_1 \in \sX \times \sY$:
\begin{align} \label{eq:improved_smooth}
     &0 \geqslant \|g^z(z_1)-g^z(z_0)\|^2_2 - L^2\|z_1-z_0\|^2_2, \\
    &d(z_1,z_0) \geqslant \frac{\mu\|z_1-z_0\|_2^2}{2}, \notag
\end{align}
where
\begin{equation*}
    d(z_1,z_0) := f(z_1) -  f(z_0) - \langle g^x(z_0),x_1-x_0\rangle - \langle g^y(z_1),y_1-y_0\rangle.
\end{equation*}
The constraints are not sufficient for interpolability because they do not recover interpolation conditions for smooth strongly convex functions \cite{taylor2017smooth}. Therefore, we derive additional pairwise constraints for $f \in \mathcal S_{\mu, L}$.

\begin{lemma} [Definition for Lipschitz continuous gradient \cite{Taylor2017ConvexIA}]. Let $f$ be a continuous function and $L > 0$. We say that $f$ is $L$-smooth if \ $\forall x, y \in \sX \times \sY$:
\begin{equation}
    \left|f(y) - f(x) - \langle \nabla f(x),y-x\rangle \right| \leqslant \frac{L\|y-x\|_2^2}{2}.
\end{equation}
\label{lem:smooth_saddle_alt_def}
\end{lemma}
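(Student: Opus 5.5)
The lemma is stated as a definition, but its content is an equivalence: for a continuous, differentiable $f$, the quadratic two-sided bound $|f(y)-f(x)-\langle\nabla f(x),y-x\rangle|\le \frac{L}{2}\|y-x\|_2^2$ for all $x,y$ holds if and only if $\nabla f$ is $L$-Lipschitz. Here $x,y$ denote generic points of $\sX\times\sY$ and $\nabla f$ the full gradient. I plan to prove the two implications separately.

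\emph{Lipschitz gradient implies the bound.} I would use the fundamental theorem of calculus along the segment,
\begin{equation*}
f(y)-f(x)-\langle\nabla f(x),y-x\rangle=\int_0^1\langle\nabla f(x+t(y-x))-\nabla f(x),y-x\rangle\,dt .
\end{equation*}
By Cauchy--Schwarz and the Lipschitz property, the absolute value of the integrand is at most $tL\|y-x\|_2^2$. Integrating $t$ from $0$ to $1$ gives the factor $\tfrac12$, which is the claimed bound.

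\emph{The bound implies a Lipschitz gradient.} This is the step I expect to be the main obstacle, because without convexity the usual cocoercivity argument is unavailable. I would proceed as follows.
\begin{enumerate}
\item First establish differentiability. If the paper only assumes continuity, the two-sided bound at a point $x$, with some vector in the role of $\nabla f(x)$, forces $f$ to be Fréchet differentiable at $x$ with that vector as its gradient. So the only task left is the Lipschitz estimate.
\item Fix $x,y$, set $u=y-x$ and $\delta=\nabla f(y)-\nabla f(x)$, and pick an auxiliary point $w=x+v$ with $v$ to be chosen.
\item Write the bound for the pairs $(x,w)$ and $(y,w)$, and also for $(x,y)$ and $(y,x)$.
\item Combine them so the function values cancel. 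Adding the bounds for $(x,y)$ and $(y,x)$ gives $|\langle\delta,u\rangle|\le L\|u\|^2$. Subtracting the expansions at $w$ around $x$ and around $y$ leaves $\langle\delta,w-y\rangle$ plus terms already controlled.
\item Concretely, this yields $|\langle\delta,v-u\rangle|\le \frac{L}{2}\bigl(\|v\|^2+\|v-u\|^2+\|u\|^2\bigr)$ after accounting for $f(y)-f(x)$.
\item Choose $v=u/2+s\,\delta/\|\delta\|$. Then $\|v\|^2$ and $\|v-u\|^2$ both equal $\|u\|^2/4+s^2$ plus cross terms, and these cross terms cancel between the two.
\item Optimize over $s$ and use $|\langle\delta,u\rangle|\le L\|u\|^2$ to absorb the leftover terms. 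The target is $\|\delta\|\le L\|u\|$.
\end{enumerate}
If the constants in this direct route fail to close, the fallback I would try is the symmetric trick: set $h=\frac{L}{2}\|\cdot\|^2-f$ and $k=\frac{L}{2}\|\cdot\|^2+f$. The two-sided bound says that $h$ and $k$ each lie above their tangents, so both are convex with $2L$-smooth upper bounds. I would then use convex cocoercivity for $h$ and $k$ and add the resulting inequalities to recover $\|\nabla f(x)-\nabla f(y)\|\le L\|x-y\|$.

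Since the statement is attributed to \cite{Taylor2017ConvexIA}, it is also acceptable to cite that result for this direction, and the paper likely does so.
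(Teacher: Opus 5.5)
The paper gives no proof of this statement. It quotes it from \cite{Taylor2017ConvexIA} and uses it as a definition, so your argument must stand on its own. Your first direction is fine, and so is the remark that the two-sided bound forces Fr\'echet differentiability. Your fallback also works, so the proposal does contain a complete proof. The primary direct route for the converse, however, has a genuine gap: it cannot reach the constant $L$, whatever $v$ you choose.

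With $v=u/2+s\,\delta/\|\delta\|$, your step~5 reads $|s\|\delta\|-\tfrac12\langle\delta,u\rangle|\le L(\tfrac34\|u\|^2+s^2)$. Absorbing $\langle\delta,u\rangle$ and optimizing in $s$ gives only $\|\delta\|\le\sqrt5\,L\|u\|$, or $\sqrt3\,L\|u\|$ if you use both signs of $s$. Step~5 cannot give anything better: any $\delta\perp u$ with $\|\delta\|=\sqrt3\,L\|u\|$ satisfies the step~5 inequality for every $v$, and also $|\langle\delta,u\rangle|\le L\|u\|^2$. The information is lost when you take absolute values of both expansions at $w$ and replace the fixed quantity $f(y)-f(x)-\langle\nabla f(x),u\rangle$ by its bound.

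The repair is to keep the signs and use two auxiliary points $w_\pm=\tfrac{x+y}{2}\pm\tfrac{\delta}{2L}$, which is your $v$ with $s=\pm\|\delta\|/(2L)$. At $w_+$, combine the upper bound expanded at $x$ with the lower bound expanded at $y$; at $w_-$, do the reverse. Adding the two, all function values cancel and
\[
\langle\delta,w_+-w_-\rangle\le\tfrac L2\bigl(\|w_+-x\|^2+\|w_+-y\|^2+\|w_--x\|^2+\|w_--y\|^2\bigr).
\]
This is $\tfrac1L\|\delta\|^2\le\tfrac L2\|u\|^2+\tfrac1{2L}\|\delta\|^2$, i.e.\ $\|\delta\|\le L\|u\|$. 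The bounds for the pairs $(x,y)$ and $(y,x)$ are not needed.

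Your fallback is really the same argument. The point $w_+$ is exactly $y-\tfrac1{2L}\bigl(\nabla h(y)-\nabla h(x)\bigr)$, and $w_-$ is its counterpart with $x,y$ swapped. These are the points used in the standard proof that a convex function with a $2L$-quadratic upper bound has a $\tfrac1{2L}$-cocoercive gradient. That is the same minimization trick as in the proof of Theorem~\ref{lem:smooth_monotone_saddle_property}.

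If you go that way, two remarks apply:
\begin{enumerate}
\item You must cite or prove the cocoercivity lemma. It needs only differentiability, convexity and the upper bound.
\item One of $h,k$ already suffices. Since $\nabla h(y)-\nabla h(x)=Lu-\delta$, the inequality $\langle Lu-\delta,u\rangle\ge\tfrac1{2L}\|Lu-\delta\|^2$ simplifies directly to $\|\delta\|\le L\|u\|$. This is the same computation as in Theorem~\ref{teo:smooth_cocoercive}.
\end{enumerate}
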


For $f \in \mathcal S_{0,L}$, Lemma~\ref{lem:smooth_saddle_alt_def} implies $\forall z_0,z_1 \in \sX \times \sY$:
\begin{align*}
    &f(z_1) \leqslant f(z_0) + \langle g^x(z_0),x_1-x_0\rangle + \langle g^y(z_0),y_1-y_0\rangle + \frac{L}{2}\|z_1-z_0\|_2^2, \\
    &f(z_1) \geqslant f(z_0) + \langle g^x(z_0),x_1-x_0\rangle + \langle g^y(z_0),y_1-y_0\rangle - \frac{L}{2}\|z_1-z_0\|_2^2.
\end{align*}

\begin{theorem} Let $f \in \mathcal S_{\mu, L}$. Then $\forall z_0, z_1 \in \sX \times \sY$:
\begin{align*}
    f(z_1) &\geqslant f(z_0) + \langle g^x(z_0),x_1-x_0\rangle + \langle g^y(z_1),y_1-y_0\rangle \\&\quad+ \frac{1}{2(L-\mu)}\|g^x(z_1) - g^x(z_0) - \mu(x_1-x_0)\|_2^2 + \frac{\mu}{2}\|x_1-x_0\|_2^2 - \frac{L}{2}\|y_1-y_0\|_2^2, \\
   f(z_1) &\geqslant f(z_0) + \langle g^x(z_0), x_1-x_0\rangle + \langle g^y(z_1),y_1-y_0\rangle \\&\quad+ \frac{1}{2(L-\mu)}\|g^y(z_1) - g^y(z_0) + \mu(y_1-y_0)\|_2^2 + \frac{\mu}{2}\|y_1-y_0\|_2^2 - \frac{L}{2}\|x_1-x_0\|_2^2.
\end{align*}
\label{lem:smooth_monotone_saddle_property}
\end{theorem}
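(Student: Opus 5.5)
The plan is to adapt the standard proof of the $\mathcal F_{\mu,L}$ interpolation inequality (via an auxiliary ``descent point'') to the $x$-block. The $y$-block is handled by the joint smoothness bound of Lemma~\ref{lem:smooth_saddle_alt_def}, and that bound is the source of the penalty $-\frac L2\|y_1-y_0\|_2^2$. By symmetry (apply the first inequality to $-f(y,x)$, which lies in $\mathcal S_{\mu,L}$ with the roles of $x$ and $y$ exchanged) it suffices to prove the first inequality.

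\emph{Step 1 (reduction to $\mu=0$ in $x$).} Set $h(x,y):=f(x,y)-\frac{\mu}{2}\|x\|_2^2$. I will check that $h(\cdot,y)$ is convex with $(L-\mu)$-Lipschitz gradient for every fixed $y$. This holds because $x\mapsto f(x,y)$ is in $\mathcal F_{\mu,L}$: restrict Definitions~\ref{fed:monotone}--\ref{fed:smooth} to pairs with equal $y$. Then $g^x_h(z)=g^x(z)-\mu x$, so the claimed term $\frac{1}{2(L-\mu)}\|g^x(z_1)-g^x(z_0)-\mu(x_1-x_0)\|_2^2$ becomes $\frac{1}{2(L-\mu)}\|g^x_h(z_1)-g^x_h(z_0)\|_2^2$. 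Moreover, $\frac\mu2\|x_1\|^2-\frac\mu2\|x_0\|^2-\mu\langle x_0,x_1-x_0\rangle=\frac\mu2\|x_1-x_0\|^2$ exactly absorbs the $\frac\mu2\|x_1-x_0\|_2^2$ term. So the target is the same inequality for $h$ with $\mu=0$ and $L$ replaced by $L-\mu$ in the gradient-difference term. The $y$-penalty stays $-\frac L2\|y_1-y_0\|^2$; this is legitimate because subtracting $\frac\mu2\|x\|^2$ does not affect the $y$-block.

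\emph{Step 2 (descent point).} Let $\ell:=L-\mu$ and $\tilde x:=x_1-\frac1\ell\bigl(g^x_h(z_1)-g^x_h(z_0)\bigr)$. Split
\[
h(z_1)-h(z_0)=\bigl[h(x_1,y_1)-h(\tilde x,y_1)\bigr]+\bigl[h(\tilde x,y_1)-h(\tilde x,y_0)\bigr]+\bigl[h(\tilde x,y_0)-h(x_0,y_0)\bigr].
\]
The first bracket is bounded below by the descent lemma for $h(\cdot,y_1)$: it is at least $\langle g^x_h(z_1),x_1-\tilde x\rangle-\frac\ell2\|x_1-\tilde x\|^2$. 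The third bracket is bounded below by convexity of $h(\cdot,y_0)$: it is at least $\langle g^x_h(z_0),\tilde x-x_0\rangle$. Substituting $\tilde x$, these two brackets together give the desired $\langle g^x_h(z_0),x_1-x_0\rangle+\frac{1}{2\ell}\|g^x_h(z_1)-g^x_h(z_0)\|^2$. This is the usual computation.

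\emph{Step 3 (the $y$-move; main obstacle).} The middle bracket must be shown to be at least $\langle g^y(z_1),y_1-y_0\rangle-\frac L2\|y_1-y_0\|^2$. By concavity in $y$ one has $h(\tilde x,y_1)-h(\tilde x,y_0)\ge\langle g^y(\tilde x,y_1),y_1-y_0\rangle$, but this uses the gradient at $(\tilde x,y_1)$, not at $z_1$. I expect this mismatch to be the hardest step.

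My first attempt is to move along $y$ at $x_1$ instead, that is, to reorder the split as $[h(x_1,y_1)-h(x_1,y_0)]+[h(x_1,y_0)-h(\tilde x,y_0)]+[h(\tilde x,y_0)-h(x_0,y_0)]$. Then concavity gives exactly $\langle g^y(z_1),y_1-y_0\rangle$. The price is that the descent-lemma step is now applied at $(x_1,y_0)$, so it involves $g^x_h(x_1,y_0)$ instead of $g^x_h(z_1)$. I would repair this by applying the joint two-sided bound of Lemma~\ref{lem:smooth_saddle_alt_def} to $f$ between $(x_1,y_0)$ and $(\tilde x,y_1)$, or to the combination of both orderings. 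This absorbs the cross-term $\langle g^x(x_1,y_1)-g^x(x_1,y_0),\cdot\rangle$ into the $-\frac L2\|y_1-y_0\|^2$ budget. To verify this, I would check that the resulting quadratic form in $(\Delta y,\ g^x(x_1,y_1)-g^x(x_1,y_0))$ is nonnegative, using $\|g^z(x_1,y_1)-g^z(x_1,y_0)\|\le L\|y_1-y_0\|$.
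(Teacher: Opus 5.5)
Your Steps 1 and 2 are correct. The symmetry reduction also works, provided you interchange $z_0$ and $z_1$ when passing to $-f(y,x)$. The gap is in Step 3, and your proposed repair does not close it.

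Write $\Delta:=g^x_h(z_1)-g^x_h(z_0)$. In the reordered split your three bounds sum to
\[
h(z_1)-h(z_0)\geqslant \langle g^y(z_1),y_1-y_0\rangle+\langle g^x_h(z_0),x_1-x_0\rangle+\tfrac{1}{2\ell}\|\Delta\|_2^2-\tfrac{1}{\ell}\langle g^x(x_1,y_1)-g^x(x_1,y_0),\Delta\rangle .
\]
So the term you must absorb into $-\frac L2\|y_1-y_0\|_2^2$ is \emph{linear} in $\Delta$, and $\Delta$ is not controlled by $y_1-y_0$. For example, $f=x^2+xy-\frac12 y^2$ lies in $\mathcal S_{1,3}$, so $\ell=2$. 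For this $f$ the last term equals $-\frac12(y_1-y_0)\bigl((x_1-x_0)+(y_1-y_0)\bigr)$. Take $y_1-y_0=1$ and let $x_1-x_0\to\infty$: the term tends to $-\infty$, while your budget is $-\frac32$. Hence no nonnegativity check of a quadratic form in $\bigl(y_1-y_0,\ g^x(x_1,y_1)-g^x(x_1,y_0)\bigr)$ can succeed.

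The same obstruction appears in your original ordering. There, replacing $g^y(\tilde x,y_1)$ by $g^y(z_1)$ costs up to $\frac{L}{\ell}\|\Delta\|_2\|y_1-y_0\|_2$. In general, whenever you move separately in $x$ and in $y$, you pay a mixed term of order $\|x_1-\tilde x\|_2\|y_1-y_0\|_2$, and it cannot be charged to $\|y_1-y_0\|_2^2$.

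The missing idea is to apply the joint bound once, on the diagonal move from $z_1$ straight to $(\tilde x,y_0)$, not between $(x_1,y_0)$ and $(\tilde x,y_1)$. That is, bound the sum $h(z_1)-h(\tilde x,y_0)$ of your first two brackets in a single step. Subtracting $\frac\mu2\|x\|_2^2$ from the upper half of Lemma~\ref{lem:smooth_saddle_alt_def} gives the anisotropic descent inequality
\[
h(w)\leqslant h(z)+\langle g^x_h(z),x_w-x_z\rangle+\langle g^y(z),y_w-y_z\rangle+\tfrac{L-\mu}{2}\|x_w-x_z\|_2^2+\tfrac{L}{2}\|y_w-y_z\|_2^2 .
\]
Taking $z=z_1$ and $w=(\tilde x,y_0)$ yields
\[
h(z_1)-h(\tilde x,y_0)\geqslant \langle g^x_h(z_1),x_1-\tilde x\rangle-\tfrac{\ell}{2}\|x_1-\tilde x\|_2^2+\langle g^y(z_1),y_1-y_0\rangle-\tfrac{L}{2}\|y_1-y_0\|_2^2 .
\]
Add your convexity bound for the third bracket and repeat your Step 2 computation; this gives exactly the claim. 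The $y$-gradient lands at $z_1$, and the penalty $-\frac L2\|y_1-y_0\|_2^2$ appears, precisely because the expansion is taken at $z_1$.

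This is the paper's proof. The paper phrases it as minimizing this upper bound over $x$ and using that $x_0$ minimizes $\theta(\cdot,y_0)$ for the tilted function $\theta=h-\langle g^x_h(z_0),x-x_0\rangle$.
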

\begin{proof}
From Lemma~\ref{lem:smooth_saddle_alt_def}:
\begin{equation}
    f(z_1) \leqslant f(z_0) + \langle g^x(z_0),x_1-x_0\rangle + \langle g^y(z_0),y_1-y_0\rangle + \frac{L}{2}\|z_1-z_0\|_2^2.
    \label{eq:smsp_1}
\end{equation}
Consider a new function
\begin{align*}
    \phi = f - \frac{\mu\|x\|_2^2}{2}.
\end{align*}    
We also denote
\begin{align*}
    &\phi^x := \nabla_x \phi(z) = g^x(z) - \mu x,\\
    &\phi^y := \nabla_y \phi(z) = g^y(z).
\end{align*}
We can rewrite \eqref{eq:smsp_1} in terms of $\phi$:
\begin{align}
    \phi(z_1) &\leqslant \phi(z_0) + \langle \phi^x(z_0),x_1 -x_0\rangle  \notag\\&\quad+ \langle \phi^y(z_0),y_1 -y_0\rangle + \frac{L}{2}\|y_1-y_0\|_2^2 + \frac{(L-\mu)}{2}\|x_1-x_0\|_2^2.
    \label{eq:smsp_2}
\end{align}
Since $f \in \mathcal S_{\mu, L}$, it means $\phi \in \mathcal S$. We also introduce $\theta \in \mathcal S$:
\begin{align*}
    \theta(z) = \phi(z) - \langle \phi^x(z_\ast),x-x_\ast\rangle,
\end{align*}
and denote
\begin{align*}
    &\theta^x := \nabla_x \theta(z) = \phi^x(z) - \phi^x(z_\ast), \\
    &\theta^y := \nabla_y \theta(z) = \phi^y(z),
\end{align*}
where $z_\ast = [x_\ast^\top, y_\ast^\top]^\top$ some arbitrary point. Because $\phi \in \mathcal S$, we have
\begin{align*}
    x_\ast = \und{\arg\min}{x} \hspace{2pt} \theta(x,y_\ast).
\end{align*}
From \eqref{eq:smsp_2}, we have
\begin{align}
    \theta(z_1) &\leqslant \theta(z_0) + \langle \theta^x(z_0),x_1 -x_0\rangle \notag\\ &\quad+ \langle \theta^y(z_0),y_1 -y_0\rangle + \frac{L}{2}\|y_1-y_0\|_2^2 + \frac{(L-\mu)}{2}\|x_1-x_0\|_2^2.
    \label{eq:smsp_3}
\end{align}
Next, we write an upper bound for $\theta(z_\ast)$:
\begin{align*}
    &\theta(x_\ast,y_\ast) \\&\quad = \und{\min}{x} \ \theta(x,y_\ast) \\
    &\quad \leqslant \und{\min}{x} \left [\theta(x_1,y_1) + \langle\theta^x(x_1,y_1),x -x_1\rangle + \langle \theta^y(x_1,y_1),y_\ast -y_1\rangle \right.\\&\quad \quad \left. + \frac{L}{2}\|y_\ast -y_1\|_2^2 +\frac{(L-\mu)}{2}\|x -x_1\|_2^2 \right] \\
    &\quad = \und{\min}{x} \left [\theta(x_1,y_1) + \langle \theta^y(x_1,y_1),y_\ast -y_1\rangle + \frac{L}{2}\|y_\ast -y_1\|_2^2 \right. \\&\quad \quad \left.- \|\theta^x(x_1,y_1)\|_2 \|x-x_1\|_2 + \frac{(L-\mu)}{2}\|x -x_1\|_2^2 \right] \\
    &\quad = \theta(x_1,y_1) + \langle \theta^y(x_1,y_1),y_\ast -y_1\rangle + \frac{L}{2}\|y_\ast -y_1\|_2^2 - \frac{\|\theta^x(x_1,y_1)\|_2^2}{2(L-\mu)}.
\end{align*}
Rewriting the above inequality in terms of $\phi$:
\begin{align*}
    \phi(x_1,y_1) &\geqslant \phi(x_\ast,y_\ast) + \langle \phi^x(x_\ast,y_\ast),x_1-x_\ast\rangle  \\
    &\quad + \langle \phi^y(x_1,y_1),y_1 -y_\ast\rangle + \frac{\|\phi^x(x_1,y_1)-\phi^x(x_\ast,y_\ast)\|_2^2}{2(L-\mu)} - \frac{L}{2}\|y_1 -y_\ast\|_2^2,
\end{align*}
and then, in terms of the original function $f \in \mathcal S_{\mu, L}$:
\begin{align*}
    f(z_1) &\geqslant f(z_0) +\langle  g^x(z_0),x_1-x_0\rangle + \langle g^y(z_1),y_1-y_0\rangle  \\&\quad+ \frac{1}{2(L-\mu)}\|g^x(z_1) - g^x(z_0) - \mu(x_1-x_0)\|_2^2 + \frac{\mu}{2}\|x_1-x_0\|_2^2 - \frac{L}{2}\|y_1-y_0\|_2^2, 
\end{align*}
where we changed $z_\ast \rightarrow z_0$.
Thus, we derived the first inequality. The second inequality is derived similarly, except we consider $\phi \in \mathcal S$:
\begin{align*}
    &\phi = f + \frac{\mu\|y\|_2^2}{2}, \\
    &\phi^x := \nabla_x \phi(z) = g^x(z),  \\
    &\phi^y := \nabla_y \phi(z) = g^y(z) + \mu y,
\end{align*}
and $\theta \in \mathcal S$:
\begin{align*}
    &\theta(z) = \phi(z) - \langle \phi^y(z_\ast),y-y_\ast \rangle,\\
    &\theta^x := \nabla_x \theta(z) = \phi^x(z), \\
    &\theta^y := \nabla_y \theta(z) = \phi^y(z) - \phi^y(z_\ast).
\end{align*}
Finally, instead of minimization over $x$ we maximize over $y$ to get a lower bound for $\theta(x_\ast,y_\ast)$.
\end{proof}

\begin{theorem} If  $f \in \mathcal S_{\mu, L}$ then $\forall z_1, z_0 \in \sX \times \sY$ the following inequalities hold:
\begin{align}
        0 &\geqslant \|g^z(z_1)-g^z(z_0)\|^2_2 - L^2\|z_1-z_0\|^2_2, \label{eq:lem_smooth_1}\\
        d(z_1,z_0) &\geqslant \frac{\mu\|z_1-z_0\|_2^2}{2} \label{eq:lem_smooth_2},\\
        d(z_1,z_0) &\geqslant \frac{1}{2(L-\mu)}\|g^x(z_1) - g^x(z_0) - \mu(x_1-x_0)\|_2^2 + \frac{\mu}{2}\|x_1-x_0\|_2^2 \notag \\&\quad- \frac{L}{2}\|y_1-y_0\|_2^2 \label{eq:lem_smooth_3},\\
        d(z_1,z_0) &\geqslant \frac{1}{2(L-\mu)}\|g^y(z_1) - g^y(z_0) + \mu(y_1-y_0)\|_2^2 + \frac{\mu}{2}\|y_1-y_0\|_2^2 \notag\\&\quad- \frac{L}{2}\|x_1-x_0\|_2^2 \label{eq:lem_smooth_4},
\end{align}
where
\begin{align*}
    d(z_1,z_0) := f(z_1) -  f(z_0) - \langle g^x(z_0),x_1-x_0\rangle - \langle g^y(z_1),y_1-y_0\rangle.
\end{align*}
\label{teo:smooth_monotone_interpolation_candidate}
\end{theorem}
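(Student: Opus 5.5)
This theorem only collects consequences of earlier results, so I will verify each of the four inequalities separately, taking $f \in \mathcal S_{\mu, L}$ and arbitrary $z_0, z_1 \in \sX \times \sY$. No new construction should be needed. The one thing to keep straight throughout is that $d(z_1,z_0)$ pairs the $x$-gradient at $z_0$ with the $y$-gradient at $z_1$, exactly as in Theorem~\ref{teo:monotone_conditions} and Theorem~\ref{lem:smooth_monotone_saddle_property}.

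For \eqref{eq:lem_smooth_1}, the operator $g^z$ is $L$-Lipschitz by Definition~\ref{fed:smooth}. Squaring $\|g^z(z_1)-g^z(z_0)\|_2 \leqslant L\|z_1-z_0\|_2$, which is legitimate since both sides are nonnegative, gives the claim directly.

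For \eqref{eq:lem_smooth_2}, we have $f \in \mathcal S_{\mu, L} \subset \mathcal S_\mu$. Any such $f$ trivially interpolates the finite sequence $\{(x_k,y_k,g^x(z_k),g^y(z_k),f(z_k))\}_{k\in\{0,1\}}$. The necessity direction of Theorem~\ref{teo:monotone_conditions}, applied with $i=1$, $j=0$, therefore gives
\[
f(z_1) \geqslant f(z_0) + \langle g^x(z_0),x_1-x_0\rangle + \langle g^y(z_1),y_1-y_0\rangle + \tfrac{\mu}{2}\|x_1-x_0\|_2^2 + \tfrac{\mu}{2}\|y_1-y_0\|_2^2 .
\]
Since $\|x_1-x_0\|_2^2+\|y_1-y_0\|_2^2 = \|z_1-z_0\|_2^2$, this is exactly $d(z_1,z_0)\geqslant \frac{\mu}{2}\|z_1-z_0\|_2^2$.

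For \eqref{eq:lem_smooth_3} and \eqref{eq:lem_smooth_4}, I will move the linear terms of the two inequalities of Theorem~\ref{lem:smooth_monotone_saddle_property} to the left-hand side, which recognizes them as lower bounds on $d(z_1,z_0)$. The only step needing care is to check that the sign conventions agree: $g^y$ is the partial gradient itself, not the negated block of $g^z$, in both the definition of $d$ and in that theorem. Once that is confirmed the rewriting is immediate.

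I expect no genuine obstacle. The only subtle point is invoking Theorem~\ref{teo:monotone_conditions} for necessity, which needs just the trivial observation that an actual function in $\mathcal S_\mu$ interpolates its own samples.
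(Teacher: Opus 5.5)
Your proposal is correct and follows the paper's proof. The paper obtains the first two inequalities as the pair \eqref{eq:improved_smooth}, from Definition~\ref{fed:smooth} together with the necessity part of Theorem~\ref{teo:monotone_conditions}, and takes the last two directly from Theorem~\ref{lem:smooth_monotone_saddle_property} rewritten in terms of $d(z_1,z_0)$; your write-up simply makes these steps and the sign conventions explicit.
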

\begin{proof}
The first pair of constraints is \eqref{eq:improved_smooth}. The last two inequalities come from Theorem~\ref{lem:smooth_monotone_saddle_property}.
\end{proof}

Although the last two inequalities in Theorem~\ref{teo:smooth_monotone_interpolation_candidate} are redundant when imposed on all points, they strengthen the first two when imposed on the sequence $\widetilde{A}_I$. With Theorem~\ref{teo:smooth_cocoercive}, we automatically obtain inequalities for $\mathcal S^{cc}_{\mu, L}$.

\begin{theorem} If $f \in \mathcal S^{cc}_{\mu, L}$ then  $\forall z_1, z_0 \in \sX \times \sY$ the following inequalities hold:
\begin{align}
        0 &\geqslant \frac{1}{L}\|g^z(z_1)-g^z(z_0)\|^2_2 - \langle g^z(z_1)-g^z(z_0), \ z_1-z_0 \rangle,  \\
        d(z_1,z_0) &\geqslant \frac{\mu\|z_1-z_0\|_2^2}{2}, \\
        d(z_1,z_0) &\geqslant \frac{1}{2(L-\mu)}\|g^x(z_1) - g^x(z_0) - \mu(x_1-x_0)\|_2^2 + \frac{\mu}{2}\|x_1-x_0\|_2^2, \\
        d(z_1,z_0) &\geqslant \frac{1}{2(L-\mu)}\|g^y(z_1) - g^y(z_0) + \mu(y_1-y_0)\|_2^2 + \frac{\mu}{2}\|y_1-y_0\|_2^2. 
\end{align}
where $d(z_1,z_0)$ defined the same as in Theorem~\ref{teo:smooth_monotone_interpolation_candidate}
\label{teo:cocoercive_monotone_interpolation_candidate}
\end{theorem}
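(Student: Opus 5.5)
The plan is to reduce everything to Theorem~\ref{teo:smooth_monotone_interpolation_candidate} through the change of variables in Theorem~\ref{teo:smooth_cocoercive}. The first two inequalities need no transformation. The first one is exactly the $1/L$-cocoercivity of the operator (Definition~\ref{fed:cocoercive}). Since $\mathcal S^{cc}_{\mu,L}\subset\mathcal S_\mu$, the second one follows directly from Theorem~\ref{teo:monotone_conditions} applied to the points $z_0,z_1$; its right-hand side is precisely $d(z_1,z_0)-\frac{\mu}{2}\|z_1-z_0\|_2^2$.

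For the last two inequalities, set $L' := L/2$ and $\mu' := \mu - L/2$, so that $\mu'+L'=\mu$ and $2L'=L$. By Theorem~\ref{teo:smooth_cocoercive}, the function
\[
\phi := f - \tfrac{L'}{2}\|x\|_2^2 + \tfrac{L'}{2}\|y\|_2^2
\]
belongs to $\mathcal S_{\mu',L'}$, with operator components $\phi^x = g^x - L'x$ and $\phi^y = g^y + L'y$.

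One subtlety is that $\mu'$ may be nonpositive when $\mu\le L/2$. The proof of Theorem~\ref{teo:smooth_cocoercive} never uses the sign of $\mu$, so the conclusion that $\phi$ has an $L'$-Lipschitz and $\mu'$-(hypo)monotone operator still holds. I would therefore check that the proof of Theorem~\ref{lem:smooth_monotone_saddle_property} only needs two things. First, $\phi-\frac{\mu'}{2}\|x\|_2^2$ must be convex--concave, which follows from Lemma~\ref{lem:saddle_properties} and the $\mu'$-monotonicity. Second, we need $L'-\mu'>0$ for the minimization over $x$, and indeed $L'-\mu' = L-\mu>0$. I expect this verification to be the main obstacle, though a mild one: the minimization step in that proof is unchanged. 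With this in hand, inequalities \eqref{eq:lem_smooth_3} and \eqref{eq:lem_smooth_4} hold for $\phi$ with constants $(\mu',L')$.

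It then remains to translate back to $f$. A direct expansion gives
\[
d_\phi(z_1,z_0) = d_f(z_1,z_0) - \tfrac{L'}{2}\|x_1-x_0\|_2^2 - \tfrac{L'}{2}\|y_1-y_0\|_2^2 .
\]
The cross terms $L'\langle x_0,x_1-x_0\rangle$ and $-L'\langle y_1,y_1-y_0\rangle$ combine with the differences of squares to produce these terms. On the right-hand side of \eqref{eq:lem_smooth_3} we have
\[
\phi^x(z_1)-\phi^x(z_0)-\mu'(x_1-x_0) = g^x(z_1)-g^x(z_0)-\mu(x_1-x_0),
\]
and the coefficient is $1/(2(L'-\mu')) = 1/(2(L-\mu))$. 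Moving the $L'$ terms to the right, the $x$-coefficient becomes $(\mu'+L')/2 = \mu/2$ and the $y$-coefficient becomes $(-L'+L')/2 = 0$. This yields exactly the third inequality. The fourth follows symmetrically from \eqref{eq:lem_smooth_4}, using
\[
\phi^y(z_1)-\phi^y(z_0)+\mu'(y_1-y_0) = g^y(z_1)-g^y(z_0)+\mu(y_1-y_0).
\]
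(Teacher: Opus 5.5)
Your route is the paper's: pass to $\phi$ via Theorem~\ref{teo:smooth_cocoercive}, apply the smooth-case inequalities to $\phi$, and substitute back. Your bookkeeping ($L'=L/2$, $\mu'=\mu-L/2$, $d_\phi=d_f-\frac{L'}{2}\|z_1-z_0\|_2^2$, $L'-\mu'=L-\mu$) is correct. Deriving the first two inequalities directly from cocoercivity and Theorem~\ref{teo:monotone_conditions} is cleaner than transporting them through the transform. You are also right that the case $\mu\leqslant L/2$ needs an argument. The paper starts from $f\in\mathcal S^{cc}_{\mu+L,2L}$ with $0<\mu<L$, so its proof literally covers only the case $\mu>L/2$ of the statement.

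The justification you give for that case is false, though the conclusion survives. For $\mu'<0$, the auxiliary function $\psi:=\phi-\frac{\mu'}{2}\|x\|_2^2=f-\frac{\mu}{2}\|x\|_2^2+\frac{L}{4}\|y\|_2^2$ (the function called $\phi$ in the proof of Theorem~\ref{lem:smooth_monotone_saddle_property}) need not be concave in $y$. For example, take $f=\frac{\mu}{2}\|x\|_2^2-\frac{\mu}{2}\|y\|_2^2\in\mathcal S^{cc}_{\mu,L}$ with $\mu<L/2$; then $\psi=\frac{L/2-\mu}{2}\|y\|_2^2$. Lemma~\ref{lem:saddle_properties} cannot give convex--concavity here. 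If $h$ is the operator of $\psi$, $\mu'$-monotonicity of $\phi$'s operator only yields $\langle h(z_1)-h(z_0),z_1-z_0\rangle\geqslant\mu'\|y_1-y_0\|_2^2$, and the right-hand side may be negative.

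The repair is to note that the proof of Theorem~\ref{lem:smooth_monotone_saddle_property} never uses concavity in $y$. The bound \eqref{eq:smsp_3} comes only from the descent lemma, i.e., from $L'$-Lipschitzness of $\nabla\phi$. Membership in $\mathcal S$ is invoked only to claim $x_\ast=\arg\min_x\theta(x,y_\ast)$, which needs just convexity of $\theta(\cdot,y_\ast)$. That holds: $\theta(\cdot,y_\ast)$ differs from $f(\cdot,y_\ast)-\frac{\mu}{2}\|x\|_2^2$ by an affine function of $x$, and $f$ is $\mu$-strongly convex in $x$. Symmetrically, the fourth inequality needs only concavity in $y$ of $\phi+\frac{\mu'}{2}\|y\|_2^2=f-\frac{L}{4}\|x\|_2^2+\frac{\mu}{2}\|y\|_2^2$, which follows from $\mu$-strong concavity of $f$ in $y$. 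Replace the convex--concave claim with these one-sided facts, and your argument proves the statement for all $0<\mu<L$.
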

\begin{proof}
Let $f \in \mathcal S^{cc}_{\mu+L,\ 2L}$ and $\phi := f - \frac{L}{2}\|x\|_2^2 + \frac{L}{2}\|y\|_2^2$. Then by Theorem~\ref{teo:smooth_cocoercive}, $\phi \in \mathcal S_{\mu,\ L}$ and satisfies the constraints from Theorem~\ref{teo:cocoercive_monotone_interpolation_candidate}. By substituting
\begin{align*}
    &\phi := f - \frac{L}{2}\|x\|_2^2 + \frac{L}{2}\|y\|_2^2, \\
    &\phi^x = g^x - L x, \quad \phi^y = g^y + L y,
\end{align*}
we derive the constraints.
\end{proof}

With Theorem~\ref{teo:smooth_monotone_interpolation_candidate} and Theorem~\ref{teo:cocoercive_monotone_interpolation_candidate} we provide the strongest inequalities for the respective classes, to the best of our knowledge.

\section{Experiment: showcasing the novel constraints}

\subsection{Description and results}

We show that the novel constraints impose stronger restrictions on the iterates than weaker alternative descriptions which may lead to noticeably tighter upper bounds for algorithms. We consider the two variations of the baseline GDA scheme. For certification of linear convergence, we use the framework for automatic generation of quadratic Lyapunov functions \cite{taylor2018lyapunov} which combines the ideas of PEP and IQC \cite{lessard2016analysis}. All SDP problems were solved with MOSEK~\cite{mosek}.

The first set is inequalities for $\mu$-strongly monotone, $L$-Lipshitz operator. For all $i,j \in I$:
\begin{align}
    & \| g^z(z_i)-g^z(z_j)\|_2 \leqslant  L \| z_i-z_j \|_2, \notag \\
    &\langle g^z(z_i)-g^z(z_j), z_i-z_j\rangle \geqslant \mu\|z_i-z_j\|_2^2, \label{eq:definition_constraints}
\end{align}
The second set is the first two constraints from Theorem~\ref{teo:smooth_monotone_interpolation_candidate}:
\begin{align}
        & \| g^z(z_i)-g^z(z_j)\|_2 \leqslant  L \| z_i-z_j \|_2 \notag, \\
        &d_{ij} \geqslant \frac{\mu\|z_i-z_j\|_2^2}{2}. \label{eq:reduced_constraints}
\end{align}
And finally, the full set of constraints from Theorem~\ref{teo:smooth_monotone_interpolation_candidate}. 

\begin{minipage}[htp]{0.48\textwidth} 
    \begin{algorithm}[H]
    \caption{Sim-GDA.}
    	\label{alg:sim}
    \begin{algorithmic}[1]
    \State 
    \noindent {\bf Input:} Starting point $z_{start}$, number of steps $N$.
    \For {$k = 1, \dots, N$}
            \State $x_{k+1} = x_{k} - \eta \nabla_x f(x_k,y_k),$
            \State $y_{k+1} = y_{k} + \eta \nabla_y f(x_{k},y_k).$
    \EndFor
    \State \noindent {\bf Output:} $z_N.$
    \end{algorithmic}
    \end{algorithm}
\end{minipage} \hfill
\begin{minipage}[htp]{0.48\textwidth} 
    \begin{algorithm}[H]
    \caption{Alt-GDA.}
    	\label{alg:alt}
    \begin{algorithmic}[1]
    \State 
    \noindent {\bf Input:} Starting point $z_{start}$, number of steps $N$.
    \For {$k = 1, \dots, N$}
            \State $x_{k+1} = x_{k} - \eta \nabla_x f(x_k,y_k),$
            \State $y_{k+1} = y_{k} + \eta \nabla_y f(x_{k+1},y_k).$
    \EndFor
    \State \noindent {\bf Output:} $z_N.$
    \end{algorithmic}
    \end{algorithm}
\end{minipage}

For Sim-GDA, there is no improvement (Fig.~\ref{fig:Fig1}) over existing results~\cite{zhang2021unified}, except when the condition number $\kappa = \frac{L}{\mu}$ is close to $1$.

For Alt-GDA (Figs. \ref{fig:Fig3}, \ref{fig:Fig4}, \ref{fig:Fig5}) we managed to improve the current best numerical estimate of complexity which is $\widetilde{\mathcal{O}}\left(\kappa^{1.385}\right)$. It was provided by \cite{lee2024fundamental} for $\mathcal S_{\mu,\mu, L, L, L}$ class using the branch-and-bound PEP framework (BnB-PEP) \cite{Gupta_2023}. Using the novel constraints, we obtained the estimate of $\widetilde{\mathcal{O}}\left(\kappa^{1.26}\right)$ for $\mathcal S_{\mu, L}$ that also holds for $\mathcal S_{\mu,\mu, L, L, L} \subset \mathcal S_{\mu, 2L}$.  The graph of iteration complexity $N(\kappa)$ is depicted on Fig.~\ref{fig:Fig6} where $N = -1/\log \rho$ is the number of iterations to convergence. We also manage to guarantee linear convergence over a wider range of step sizes. The gap becomes even larger for big condition numbers (Fig.~\ref{fig:Fig5}). 

Using the weaker constraints, we could only verify (roughly) $\widetilde{\mathcal{O}}\left(\kappa^{1.34}\right)$ which is closer to the result from \cite{lee2024fundamental}.

\begin{figure}[!ht]
\centering
\includegraphics[width=0.8\linewidth]{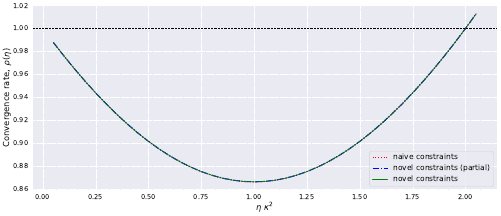} 
\caption{Sim-GDA: worst case convergence rate $\rho$ as a function of the step size $\eta$, $\kappa$ = 2.} \label{fig:Fig1}

 \vspace{\floatsep}
\centering
\includegraphics[width=0.8\linewidth]{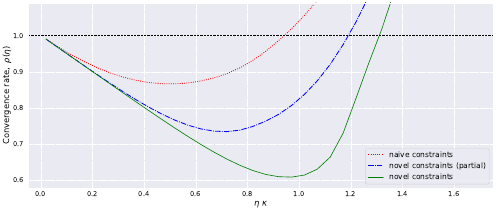} 
\caption{Alt-GDA: worst case convergence rate $\rho$ as a function of the step size $\eta$, $\kappa$ = 2.} \label{fig:Fig2}

 \vspace{\floatsep}
\centering
\includegraphics[width=0.8\linewidth]{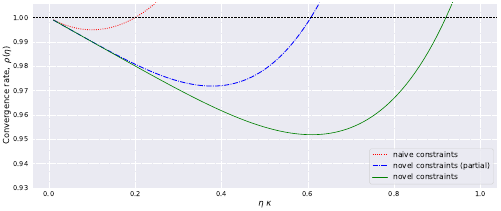} 
\caption{Alt-GDA: worst case convergence rate $\rho$ as a function of the step size $\eta$, $\kappa$ = 10.} \label{fig:Fig3}
\end{figure}

\begin{figure}[!ht]
\centering
\includegraphics[width=0.8\linewidth]{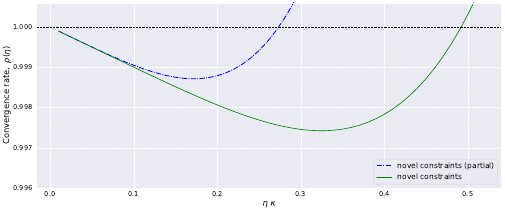} 
\caption{Alt-GDA: worst case convergence rate $\rho$ as a function of the step size $\eta$, $\kappa$ = 100.} \label{fig:Fig4}

\vspace{\floatsep}
\centering
\includegraphics[width=0.8\linewidth]{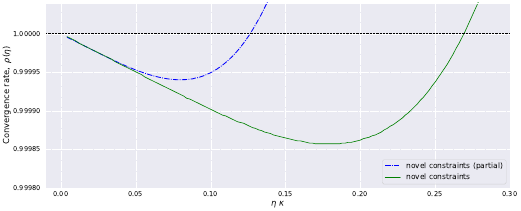} 
\caption{Alt-GDA: worst case convergence rate $\rho$ as a function of the step size $\eta$, $\kappa$ = 1000.} \label{fig:Fig5}

 \vspace{\floatsep}
\centering
\includegraphics[width=0.8\linewidth]{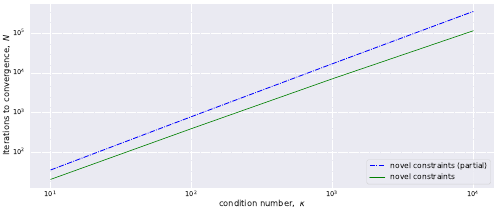} 
\caption{Alt-GDA: iterations to convergence $N$ as a function of the condition number.} \label{fig:Fig6}
\end{figure}

\clearpage

\subsection{SDP program for the experiment} 

The problem of linear convergence certificatian via quadratic Lyapunov function can be formulated as a convex semidefinite feasibility problem~\cite{taylor2018lyapunov}. 

\begin{align*} \label{problem:Lyapunov}\tag{Lyap}
    \textbf{feasible} \hspace{4pt} &(\{\lambda_{ij}^k\}_{i,j\in I_0}^{c \in C}, \ \{\nu_{ij}^c\}_{i,j\in I_1}^{c \in C},\ P_x,\ P_y )  \\
    & \lambda_{ij}^c  \geqslant 0 \quad \forall i,j \in I_0 ; \hspace{4pt} \forall c \in C,\\
    & \nu_{ij}^c \geqslant 0 \quad \forall i,j \in I_1 ; \hspace{4pt} \forall c \in C,\\
    &P^{(0)}_x -\underset{i,j\in I_0; \ c \in C}{\sum} \lambda_{ij}^c A^{c (0)}_{ij} \succeq 0, \\
    &P^{(0)}_y -\underset{i,j\in I_0;\ c \in C}{\sum} 
    \lambda_{ij}^c B^{c (0)}_{ij} \succeq 0, \\
    &\underset{i,j\in I_0;\ c \in C}{\sum} 
    \lambda_{ij}^c m^{c(0)}_{ij} = 0, \\
    &P^{(1)}_x +\underset{i,j\in I_1; c \in C}{\sum} \nu_{ij}^c A^{c(1)}_{ij} \preceq 0, \\
    &P^{(1)}_y + \underset{i,j\in I_1; c \in C}{\sum} 
    \nu_{ij}^c B^{c(1)}_{ij} \preceq 0, \\
    &\underset{i,j\in I_1; c \in C}{\sum} 
    \nu_{ij}^c m^{c(1)}_{ij} = 0.
\end{align*}

As an example, we show how parameters are defined for Sim-GDA . First, we introduce index sets: $I_0 = \{0,\ast\}$, $I_1 = \{0,1,\ast\}$, $C = \{1,2,3,4\}$. Suppose, we iterate Sim-GDA for $K \in \{0,1\}$ steps. We introduce $W_x \in \R^{d_x\times(K+2)}$ and $W_y \in \R^{d_y\times(K+2)}$:
\begin{align*}
    &W_x := \left[x_0,g^x_0,\ldots,g^x_K \right], \quad W_y = \left[y_0,g^y_0\ldots,g^y_K \right], \\
    &f := [f_0,\ldots,f_K]
\end{align*}
and Gram matrices $G_x := W_x^\top W_x$, $G_y := W_y^\top W_y$. Next, we introduce row basis vectors that satisfy $\forall i \in I_K$:
\begin{align*}
    &x_i = W_x \left(\overline{x}^{(K)}_i\right)^\top, \quad g^x_i = W_x \left(\overline{g}^{x(K)}_i\right)^\top, \quad y_i = W_y \left(\overline{y}^{(K)}_i\right)^\top, \quad g^y_i = W_y \left(\overline{g}^{y(K)}_i\right)^\top, \\
    &f_i = f \left(\overline{f}^{(K)}_i\right)^\top.
\end{align*}
 The matrices $A^{c(K)}_{ij}$ and $B^{c(K)}_{ij}$ are used to define the functional constraints for $\mathcal S_{\mu, L}$:
\begin{align*}
    m_{ij}^{c(K)}\overline{f} + \text{tr}(G_x A^{c(K)}_{ij}) + \text{tr}(G_y B^{c(K)}_{ij}) \geqslant 0, \hspace{8pt} i,j \in I_K ; \hspace{8pt} c \in C,
\end{align*}
where 
\begin{align*}
    A^{c(K)}_{ij}  := 
\begin{bmatrix} 
\overline{x}_i \\ \overline{x}_j \\ \overline{g}^x_i \\ \overline{g}^x_j
\end{bmatrix}^\top
\!\!\!A^c
\begin{bmatrix} 
\overline{x}_i \\ \overline{x}_j \\ \overline{g}^x_i \\ \overline{g}^x_j
\end{bmatrix}
,\hspace{16pt} 
B^{c(K)}_{ij} : = 
\begin{bmatrix} 
\overline{y}_i \\ \overline{y}_j \\ \overline{g}^y_i \\ \overline{g}^y_j
\end{bmatrix}^\top
\!\!\!B^c 
\begin{bmatrix} 
\overline{y}_i \\ \overline{y}_j \\ \overline{g}^y_i \\ \overline{g}^y_j
\end{bmatrix},
\end{align*}
where
\begin{align*}
A^{1} = B^{1} : = \frac{1}{4} \begin{bmatrix*}
	1  & -1 & -1 & -1 \\
	-1 & 1 & 1 & 1 \\
	-1  & 1 & -1 & 1 \\
	-1 & 1 & 1 & -1 
    \end{bmatrix*}
\end{align*}
correspond to \eqref{eq:lem_smooth_1},
\begin{align*}
&A^{2} : = \frac{1}{2} \begin{bmatrix*}
	-\mu/L  & \mu/L & 0 & -1 \\
	\mu/L & -\mu/L & 0 & 1 \\
	0  & 0 & 0 & 0 \\
	-1 & 1 & 0 & 0 
    \end{bmatrix*},
\hspace{10pt}
B^{2} : = \frac{1}{2} \begin{bmatrix*}
	-\mu/L  & \mu/L & -1 & 0 \\
	\mu/L & -\mu/L & 1 & 0 \\
	-1  & 1 & 0 & 0 \\
	0 & 0 & 0 & 0 
    \end{bmatrix*}
\\
\end{align*}
correspond to \eqref{eq:lem_smooth_2},
\begin{align*}
&A^{3} : = \frac{1}{2(1-\mu/L)} \begin{bmatrix*}
	-\mu/L  & \mu/L & \mu/L & -1 \\
	\mu/L & -\mu/L & -\mu/L & 1 \\
	\mu/L  & -\mu/L & -1 & 1 \\
	-1 & 1 & 1 & -1 
    \end{bmatrix*},
\hspace{10pt}
B^{3} : = \frac{1}{2} \begin{bmatrix*}
	1  & -1 & -1 & 0 \\
	-1 & 1 & 1 & 0 \\
	-1  & 1 & 0 & 0 \\
	0 & 0 & 0 & 0 
    \end{bmatrix*}
\\
\end{align*}
correspond to \eqref{eq:lem_smooth_3},
\begin{align*}
&A^{4} : = \frac{1}{2} \begin{bmatrix*}
	1  & -1 & 0 & -1 \\
	-1 & 1 & 0 & 1 \\
	0  & 0 & 0 & 0 \\
	-1 & 1 & 0 & 0 
    \end{bmatrix*},
\hspace{10pt}
B^{4} := \frac{1}{2(1-\mu/L)} \begin{bmatrix*}
	-\mu/L  & \mu/L & -1 & \mu/L \\
	\mu/L & -\mu/L & 1 & -\mu/L \\
	-1  & 1 & -1 & 1 \\
	\mu/L & -\mu/L & 1 & -1 
    \end{bmatrix*}
\end{align*}	\\
correspond to \eqref{eq:lem_smooth_4}.
Finally, $m_{ij}^{c(K)}$ is the same $\forall c \in C$:
\begin{align*}
    m_{ij}^{c(K)} : = \overline{f_i} - \overline{f_j}.
\end{align*}
And lastly, the parts corresponding to quadratic Lyapunov function are defined as follows:
\begin{align*}
P^{(0)}_{x} : = 
\begin{bmatrix} 
\overline{x}_0 \\ \overline{g}^x_0 
\end{bmatrix}^\top
\!\!\!P_x 
\begin{bmatrix} 
\overline{x}_0 \\ \overline{g}^x_0 
\end{bmatrix}
,\hspace{16pt} 
P^{(0)}_{y} : = 
\begin{bmatrix} 
\overline{y}_0  \\ \overline{g}^y_0
\end{bmatrix}^\top
\!\!\!P_y 
\begin{bmatrix} 
\overline{y}_0  \\ \overline{g}^y_0
\end{bmatrix},
\end{align*}

\begin{align*}
P^{(1)}_{x} : = 
\begin{bmatrix} 
\overline{x}_1 \\ \overline{g}^x_1 
\end{bmatrix}^\top
\!\!\!P_x
\begin{bmatrix} 
\overline{x}_1 \\ \overline{g}^x_1 
\end{bmatrix}
- \rho^2
\begin{bmatrix} 
\overline{x}_0 \\ \overline{g}^x_0 
\end{bmatrix}^\top
\!\!\!P_x
\begin{bmatrix} 
\overline{x}_0 \\ \overline{g}^x_0 
\end{bmatrix},
\end{align*}

\begin{align*}
P^{(1)}_{y} : = 
\begin{bmatrix} 
\overline{y}_1 \\ \overline{g}^y_1 
\end{bmatrix}^\top
\!\!\!P_y
\begin{bmatrix} 
\overline{y}_1 \\ \overline{g}^y_1 
\end{bmatrix}
- \rho^2
\begin{bmatrix} 
\overline{y}_0 \\ \overline{g}^y_0 
\end{bmatrix}^\top
\!\!\!P_y
\begin{bmatrix} 
\overline{y}_0 \\ \overline{g}^y_0  
\end{bmatrix}.
\end{align*}
Parameter $\rho^2$ is a contraction factor of Lyapunov function and symmetric $P_x, P_y \in \R^{2\times2}$ are variables along with $\lambda_{ij}^c, \nu_{ij}^c \in \R$. Feasibility of Problem \eqref{problem:Lyapunov} guarantees existence of a quadratic Lyapunov function that decreases at the rate $\rho^2$ \cite{taylor2018lyapunov}. To find the smallest $\rho$, we perform a binary search. We also utilize scaling and set $L = 1$. 

The Problem \eqref{problem:Lyapunov} presented above corresponds to Sim-GDA, so for Alt-GDA we make the appropriate changes. For instance, we assume a Lyapunov function to be a quadratic form of coordinates and gradients at both $(x_i,y_i)$ and $(x_{i+1}, y_i)$.

Because the problem is solved for three different sets of inequalities, we impose additional constraints to enforce a specific set. For the full set, we let $\forall i,j$: $\lambda_{ij}^1 = \lambda_{ji}^1$,\  $\nu_{ij}^1 = \nu_{ji}^1$. For \eqref{eq:reduced_constraints}, we additionally set $\lambda_{ij}^c = 0$ and $\nu_{ij}^c = 0$ for $c \in  \{3, 4\}$ and $\forall i,j$. Finally, to obtain \eqref{eq:definition_constraints}, we also set $\forall i,j$: $\lambda_{ij}^2 = \lambda_{ji}^2$,\  $\nu_{ij}^2 = \nu_{ji}^2$.

\section{Conclusion}

We obtained stronger constraints for smooth SCSC functions. By utilizing the correspondence between functions with Lipschitz and cocoercive operators, we also obtain inequalities for SCSC functions with cocoercive operator. We show numerically that the new inequalities allow tighter upper bounds for first-order methods using using PEP-like framework.

\backmatter

\bibliography{sn-bibliography}

@article{taylor2017smooth,
  title     = "Smooth strongly convex interpolation and exact worst-case
               performance of first-order methods",
  author    = "Taylor, Adrien B and Hendrickx, Julien M and Glineur, Fran{\c
               c}ois",
  journal   = "Math. Program.",
  publisher = "Springer Science and Business Media LLC",
  volume    =  161,
  number    = "1-2",
  pages     = "307--345",
  month     =  jan,
  year      =  2017,
  language  = "en",
  doi = "10.1007/s10107-016-1009-3"
}

@article{drori2014performance,
  title     = "Performance of first-order methods for smooth convex
               minimization: a novel approach",
  author    = "Drori, Yoel and Teboulle, Marc",
  journal   = "Math. Program.",
  publisher = "Springer Science and Business Media LLC",
  volume    =  145,
  number    = "1-2",
  pages     = "451--482",
  month     =  jun,
  year      =  2014,
  language  = "en",
  doi = "10.1007/s10107-013-0653-0"
}

@article{lessard2016analysis,
  title     = "Analysis and design of optimization algorithms via integral
               quadratic constraints",
  author    = "Lessard, Laurent and Recht, Benjamin and Packard, Andrew",
  journal   = "SIAM J. Optim.",
  publisher = "Society for Industrial \& Applied Mathematics (SIAM)",
  volume    =  26,
  number    =  1,
  pages     = "57--95",
  month     =  jan,
  year      =  2016,
  language  = "en",
  doi = "10.1137/15M1009597"
}

@inproceedings{taylor2018lyapunov,
  title={Lyapunov functions for first-order methods: Tight automated convergence guarantees},
  author={Taylor, Adrien and Van Scoy, Bryan and Lessard, Laurent},
  booktitle={International Conference on Machine Learning},
  pages={4897--4906},
  year={2018},
  organization={PMLR}
}

@inproceedings{rubbens2023interpolation,
  title={Interpolation constraints for computing worst-case bounds in performance estimation problems},
  author={Rubbens, Anne and Bousselmi, Nizar and Colla, S{\'e}bastien and Hendrickx, Julien M},
  booktitle={2023 62nd IEEE Conference on Decision and Control (CDC)},
  pages={3015--3022},
  year={2023},
  organization={IEEE}
}

@inproceedings{lee2024fundamental,
author = {Lee, Jaewook and Cho, Hanseul and Yun, Chulhee},
title = {Fundamental benefit of alternating updates in minimax optimization},
year = {2024},
publisher = {JMLR.org},
booktitle = {Proceedings of the 41st International Conference on Machine Learning},
articleno = {1056},
numpages = {76},
address = {Vienna, Austria},
series = {ICML'24}
}

@article{zhang2021unified,
author = {Zhang, Guodong and Bao, Xuchan and Lessard, Laurent and Grosse, Roger},
title = {A unified analysis of first-order methods for smooth games via integral quadratic constraints},
year = {2021},
issue_date = {January 2021},
publisher = {JMLR.org},
volume = {22},
number = {1},
issn = {1532-4435},
journal = {J. Mach. Learn. Res.},
month = jan,
articleno = {103},
numpages = {39}
}

@inproceedings{zhang2022near,
  title={Near-optimal local convergence of alternating gradient descent-ascent for minimax optimization},
  author={Zhang, Guodong and Wang, Yuanhao and Lessard, Laurent and Grosse, Roger B},
  booktitle={International Conference on Artificial Intelligence and Statistics},
  pages={7659--7679},
  year={2022},
  organization={PMLR}
}

@misc{rubbens2024constraint,
  title        = "A constraint-based approach to function interpolation, with
                  application to performance estimation for weakly convex
                  optimisation",
  author       = "Rubbens, Anne and Hendrickx, Julien M",
  year         =  2024,
  archivePrefix={arXiv},
  primaryClass = "math.OC",
  eprint       = "2405.08405",
  url={https://arxiv.org/abs/2405.08405}
}

@inproceedings{goujaud2023fundamental,
  title={On fundamental proof structures in first-order optimization},
  author={Goujaud, Baptiste and Dieuleveut, Aymeric and Taylor, Adrien},
  booktitle={2023 62nd IEEE Conference on Decision and Control (CDC)},
  pages={3023--3030},
  year={2023},
  organization={IEEE}
}

@article{goodfellow2020generative,
  title={Generative adversarial networks},
  author={Goodfellow, Ian and Pouget-Abadie, Jean and Mirza, Mehdi and Xu, Bing and Warde-Farley, David and Ozair, Sherjil and Courville, Aaron and Bengio, Yoshua},
  journal={Communications of the ACM},
  volume={63},
  number={11},
  pages={139--144},
  year={2020},
  publisher={ACM New York, NY, USA}
}

@inproceedings{li2019robust,
  title={Robust multi-agent reinforcement learning via minimax deep deterministic policy gradient},
  author={Li, Shihui and Wu, Yi and Cui, Xinyue and Dong, Honghua and Fang, Fei and Russell, Stuart},
  booktitle={Proceedings of the AAAI conference on artificial intelligence},
  volume={33},
  number={01},
  pages={4213--4220},
  year={2019}
}

@article{Krivchenko_Gasnikov_Kovalev_2024, title={Convex-Concave Interpolation and Application of PEP to the Bilinear-Coupled Saddle Point Problem}, volume={20}, url={http://dx.doi.org/10.20537/nd241215}, DOI={10.20537/nd241215}, abstractNote={<jats:p>In this paper we present interpolation conditions for several important convex-concave function classes: nonsmooth convex-concave functions, conditions for difference of strongly-convex functions in a form that contains oracle information exclusively and smooth convex-concave functions with a bilinear coupling term. Then we demonstrate how the performance estimation problem approach can be adapted to analyze the exact worst-case convergence behavior of first-order methods applied to composite bilinear-coupled min-max problems. Using the performance estimation problem approach, we estimate iteration complexities for several first-order fixed-step methods, Sim-GDA and Alt-GDA, which are applied to smooth convex-concave functions with a bilinear coupling term.</jats:p>}, number={5}, journal={Nelineinaya Dinamika}, publisher={Izhevsk Institute of Computer Science}, author={Krivchenko, V. O. and Gasnikov, A. V. and Kovalev, D. A.}, year={2024}, pages={875–893} }

@PHDTHESIS{Taylor2017ConvexIA,
  title     = "Convex Interpolation and Performance Estimation of
First-order Methods for Convex Optimization",
  author    = "Taylor, Adrien",
  school = "Universit´e catholique de Louvain",
  year      =  2017
}

@manual{mosek,
   author = "MOSEK ApS",
   title = "The MOSEK optimization software",
   year = 2025,
   url = "http://www.mosek.com/documentation"
 }

@book{VonNeumann+Morgenstern:1944,
  author    = {von Neumann, John and Morgenstern, Oskar},
  year      = {1944},
  title     = {Theory of Games and Economic Behavior},
  publisher = {Princeton University Press},
  address   = {Princeton, NJ, USA},
  edition   = {first}
}

@article{Harker1990,
  author = {Harker, Patrick T. and Pang, Jong-Shi},
  title = {Finite-dimensional variational inequality and nonlinear complementarity problems: a survey of theory, algorithms and applications},
  journal = {Mathematical Programming},
  volume = {48},
  number = {1-3},
  pages = {161--220},
  year = {1990}
}

@ARTICLE{De_Klerk2017-fm,
  title     = "On the worst-case complexity of the gradient method with exact
               line search for smooth strongly convex functions",
  author    = "de Klerk, Etienne and Glineur, Fran{\c c}ois and Taylor, Adrien
               B",
  journal   = "Optim. Lett.",
  publisher = "Springer Science and Business Media LLC",
  volume    =  11,
  number    =  7,
  pages     = "1185--1199",
  month     =  oct,
  year      =  2017,
  language  = "en"
}

@article{doi:10.1137/16M108104X,
author = {Taylor, Adrien B. and Hendrickx, Julien M. and Glineur, Fran\c{c}ois},
title = {Exact Worst-Case Performance of First-Order Methods for Composite Convex Optimization},
journal = {SIAM Journal on Optimization},
volume = {27},
number = {3},
pages = {1283-1313},
year = {2017},
doi = {10.1137/16M108104X},
URL = { 
        https://doi.org/10.1137/16M108104X
},
eprint = { 
     https://doi.org/10.1137/16M108104X}
}

@ARTICLE{Taylor2018-gk,
  title     = "Exact worst-case convergence rates of the proximal gradient
               method for composite convex minimization",
  author    = "Taylor, Adrien B and Hendrickx, Julien M and Glineur, Fran{\c
               c}ois",
  journal   = "J. Optim. Theory Appl.",
  publisher = "Springer Science and Business Media LLC",
  volume    =  178,
  number    =  2,
  pages     = "455--476",
  month     =  aug,
  year      =  2018,
  language  = "en"
}

@article{Gupta_2023,
author = {Das Gupta, Shuvomoy and Van Parys, Bart P. G. and Ryu, Ernest K.},
title = {Branch-and-bound performance estimation programming: a unified methodology for constructing optimal optimization methods},
year = {2023},
issue_date = {Mar 2024},
publisher = {Springer-Verlag},
address = {Berlin, Heidelberg},
volume = {204},
number = {1–2},
issn = {0025-5610},
url = {https://doi.org/10.1007/s10107-023-01973-1},
doi = {10.1007/s10107-023-01973-1},
journal = {Math. Program.},
month = jun,
pages = {567–639},
numpages = {73}
}

@book{kinderlehrer2000introduction,
  title={An introduction to variational inequalities and their applications},
  author={Kinderlehrer, David and Stampacchia, Guido},
  year={2000},
  publisher={SIAM}
}

\end{document}